\newtheorem{theorem}{\textbf{Theorem}}[section]
\newtheorem{lemma}{\textbf{Lemma}}[section]
\newtheorem{proposition}{\textbf{Proposition}}[section]
\newtheorem{corollary}{\textbf{Corollary}}[section]
\newtheorem{remark}{\textbf{Remark}}[section]
\newtheorem{definition}{\textbf{Definition}}[section]
\def\be{\begin{equation}}
\def\ee{\end{equation}}
\def\bea{\begin{eqnarray}}
\def\eea{\end{eqnarray}}
\def\bt{\begin{theorem}}
\def\et{\end{theorem}}
\def\bl{\begin{lemma}}
\def\el{\end{lemma}}
\def\bc{\begin{corollary}}
\def\ec{\end{corollary}}
\def\bd{\begin{definition}}
\def\ed{\end{definition}}
\def\non{\nonumber }
\def\F{\mathsf{F}}
\def\E{\mathsf{E}}
\def\S{\mathsf{S}}
\def\i{\mathrm{i}}
\def\u{\mathbf{u}}
\def\O{\widetilde{\Omega}}
\newcommand{\R}{{\mathbb R}}
\begin{document}

\title{Long-time behavior and weak-strong uniqueness for incompressible
viscoelastic flows}

\author{{\sc Xianpeng Hu}\footnote{Department of Mathematics, City University of Hong Kong, Hong Kong, P.R. China.
E-mail: \textit{xianpehu@cityu.edu.hk}} \
and {\sc Hao Wu}\footnote{School of
Mathematical
Sciences and Shanghai Key Laboratory for Contemporary Applied Mathematics, Fudan University, Shanghai 200433, P.R. China. E-mail: \textit{haowufd@yahoo.com}}}

\date{\today}

\maketitle


\begin{abstract}
We consider the Cauchy problem for incompressible viscoelastic fluids in the whole space $\mathbb{R}^d$ ($d=2,3$).
By introducing a new decomposition via Helmholtz's projections, we first provide an alternative proof on the existence of global smooth solutions near equilibrium. Then under additional assumptions that the initial data belong to $L^1$ and their Fourier modes do not degenerate at low frequencies, we obtain the optimal $L^2$ decay rates for the global smooth solutions and their spatial derivatives. At last, we establish the weak-strong uniqueness property in the class of finite energy weak solutions for the incompressible viscoelastic system.

\noindent \textbf{Keywords}: Viscoelastic flow, Navier-Stokes equations, long-time behavior, optimal decay rate, weak-strong uniqueness. \\
\textbf{AMS Subject Classification}: 35B40, 35Q35, 35L60
\end{abstract}

\section{Introduction}
\setcounter{equation}{0}

We study the initial value problem for the incompressible viscoelastic flow
\be\label{1}
\begin{cases}
& \u_t+ \u \cdot \nabla \u-\mu\Delta \u+\nabla p=\nabla \cdot \left(\frac{\partial W(\F)}{\partial \F}\F^T\right),\\
& \nabla \cdot \u=0, \\
& \F_t+ \u\cdot \nabla \F=\nabla \u \F,\\
& \u(t,x)|_{t=0}=\u_0(x), \quad \F(t,x)|_{t=0}=\F_0(x),
\end{cases}
\ee
for $(t, x)\in [0, +\infty)\times \mathbb{R}^d$, $d=2,3$. Here, the vector $\u(t, x): [0, +\infty)\times \mathbb{R}^d\to \mathbb{R}^d$ denotes the velocity field of materials, the scalar function $p(t, x): [0, +\infty)\times \mathbb{R}^d\to \mathbb{R}$ denotes the pressure, the matrix $\F(t, x): [0, +\infty)\times \mathbb{R}^d\to \mathbb{R}^{d\times d}$ stands for the deformation tensor, and $W(\F)$ is the elastic energy functional.
The third equation for $\F$ in \eqref{1} can be regarded as the consistence condition of the flow trajectories obtained from the
velocity field $\u$ and also of those obtained from the deformation tensor $\F$ \cite{LLZ05,LW01}. Moreover, on the right-hand side of the momentum equation, $\frac{\partial W(\F)}{\partial \F}$ is the Piola-Kirchhoff stress tensor and $\frac{\partial W(\F)}{\partial \F}\F^T$ is the Cauchy-Green tensor. The latter is the change variable (from Lagrangian to
Eulerian coordinates) form of the former one \cite{LLZ08}. For the sake of simplicity, we shall confine ourselves to the Hookean linear elasticity such that
$$W(\F)=\frac12|\F |^2,$$
which, however, does not reduce the essential difficulties for analysis. The
results we obtain below can be generalized to more general isotropic elastic energy
functions.

The complicated rheological phenomena of complex fluids is a consequence of interactions between the (microscopic) elastic properties and the (macroscopic) fluid motions \cite{La}. System \eqref{1} serves as an important model for the study on the dynamics of complex fluids. It presents the competition between the elastic energy and the kinetic energy, while the deformation tensor $\F$ carries all the transport/kinematic information of the micro-structures and configurations in complex fluids \cite{LLZ05,LLZ08,Lin12}. There have been many important contributions on the study of classical solutions to the viscoelastic system \eqref{1}. For instance, existence of local classical solutions as well as global classical solutions near-equilibrium of system \eqref{1} in the two-dimensional case has been proved in \cite{LLZ05} (see also \cite{LZ07} for the same result via incompressible limit). Corresponding results in three dimensional case were obtained in \cite{LLZ08} (see also \cite{CZ}). We refer to \cite{LinZ08} for the case of bounded domain, and to \cite{Q10,ZF12} for the well-posedness  in critical functional spaces. Global existence for two-dimensional small-strain viscoelasticity without assumptions on the smallness of the rotational part of the initial deformation tensor was proved in \cite{LLZ07,LZ10}. For more detailed information on the mathematical analysis of the viscoelastic system \eqref{1}, we may refer to the recent review paper \cite{Lin12}. Besides, it is also worth mentioning that there are some recent progress on the compressible viscoelastic flows \cite{DLZ,HuWang10,HuWang11,HuWang12,HWGC,Q,QZ}.

System \eqref{1} is equivalent to the usual Oldroyd-B model for viscoelastic fluids in the case of infinite Weissenberg number \cite{La,LLZ05}, which can be regarded as a coupling of a parabolic system with a hyperbolic one.
It (formally) obeys the following \textit{basic energy law} \cite{LLZ05, LLZ08}:
\be
\frac12\frac{d}{dt}\left(\|\u\|^2_{L^2}+\|\F\|_{L^2}^2\right)+\mu\|\nabla \u\|_{L^2}^2=0,\label{belF}
\ee
which indicates the absence of damping mechanism in the transport equation of $\F$. This brings the main difficulty in the proof of global existence of smooth solutions near equilibrium. Therefore, special treatments will be required to reveal the specific  physical structures of the system. In the two-dimensional case, the authors of \cite{LLZ05} introduced an auxiliary vector field to replace the variable $\F$ and revealed intrinsic nature of weak dissipation for the induced stress tensor. In the three dimensional case, a key observation was made in \cite{LLZ08} that for the strain tensor $\E$ given by $\E=\F-\mathbb{I}$, its curl $\nabla \times \E$ is indeed a higher-order term (see Lemma \ref{curl} below). Then by introducing an auxiliary variable $w=-\Delta \u+\mu^{-1}\nabla \cdot \E$ that extracts certain hidden dissipative nature on $\E$, the authors in \cite{LLZ08} proved global existence of small smooth solutions (see also \cite{CZ} for a different approach).

In this paper, our first aim is to prove the optimal $L^2$ decay rates for global (small) smooth solutions to the incompressible viscoelastic system \eqref{1} (see Theorem \ref{main}). We note that optimal decay rates for weak solutions of incompressible Navier-Stokes equations have been studied extensively in the literature, see, for instance, \cite{Ka,KM,Sch85, Sch86} and the references cited therein. However, due to the lack of dissipation on the deformation tensor $\F$, it seems that the approaches employed in the previously mentioned works (e.g., the Fourier splitting method \cite{Sch85,Sch86}) fail to apply.  On the other hand, it has been observed that the incompressible viscoelastic system \eqref{1} has a structure similar to that of the compressible Navier-Stokes equations (see \cite{Lin12}), although it may be even harder since the equation for the scalar density function (denoted by $\rho$) is now replaced by an equation for a matrix-valued function $\F$. Our method is inspired by the decomposition introduced in \cite{Da00} for the compressible Navier-Stokes equations, with which the global existence of strong solutions near equilibrium in critical spaces was proved. There the velocity field $\u$ is decomposed into the ``compressible part" (denoted by $c$) and the ``incompressible part". Then the associated linearized system for the density $\rho$ and the ``compressible part" of the velocity $c$ is shown to have a parabolic smoothing effect on $\rho$ and $c$ in the low frequencies, and also a damping effect on $\rho$ in the high frequencies. This fact turns out to be crucial in \cite{Da00} to prove the global existence of strong solutions near equilibrium for isentropic compressible fluids. Now for the viscoelastic system \eqref{e1} (i.e., the equivalent form of system \eqref{1} in terms of variables $(\u, \E)$), we introduce a different decomposition on the strain tensor $\E$ (see \eqref{newvar} below), instead on the velocity field like in \cite{Da00}. Then we observe that the linearized system for the velocity $\u$ and the ``compressible part" of $\E$ denoted by $\mathbf{n}=\Lambda^{-1}(\nabla \cdot\E)$  (see \eqref{un}) has a structure similar to the linearized system for $(\rho, c)$ of the compressible Navier-Stokes equations in \cite{Da00}. Besides, the induced coupled system \eqref{OE} for the new variables $\Omega=\Lambda^{-1}(\nabla \u -\nabla^T \u)$ and $\mathbb{E}= \E^T-\E$ provides us some extra dissipation on $\mathbb{E}$ (see Lemma \ref{diin}), which helps us to recover the existence of global smooth solutions near equilibrium (see Proposition \ref{glo}). Based on these special structures for the decomposed systems \eqref{un}--\eqref{OE} that are derived from the original incompressible viscoelastic system \eqref{e1}, we are able to obtain the optimal $L^2$ decay estimates by using similar arguments for the compressible Navier-Stokes equations and related models (see e.g., \cite{Ko02,KS99,LMZ10,LZ11,TW12}, cf. \cite{HWGC} for the decay of the compressible viscoelastic system). Briefly speaking, we first consider the linearized problems (see \eqref{L1} and \eqref{LOE}) of the decomposed systems \eqref{un}--\eqref{OE} near the equilibrium state and investigate the spectrum of the associated semigroups in terms of the decomposition of wave modes at the lower frequency and higher frequency, respectively. Then the decay of the nonlinear system follows from the Duhamel's principle together with the $L^2$ estimates for the corresponding semigroups.

The second result in this paper (see Theorem \ref{unique}) concerns the weak-strong uniqueness of the incompressible viscoelastic system \eqref{1}. Global existence of weak solutions to the incompressible viscoelastic flows with large initial data is an outstanding open problem, although there are some progress in this direction \cite{LM00,LW01}. Recently, the authors \cite{HL} proved the global existence of weak solutions with small energy for \eqref{1} in the two dimensional case $d=2$. However, the existence of weak solutions for the three dimensional case is still open. On the other hand, uniqueness of weak solutions is also a fundamental and important topic in the mathematical theory of incompressible Navier-Stokes equations. The basic idea of the so-called weak-strong uniqueness is the following: assume that a weak solution has some appropriate extra regularity (i.e., being a classical solution in suitable sense), and then prove its uniqueness in the class of weak solutions. The weak-strong uniqueness property for the classical Navier-Stokes equations has been discussed under different regularity assumptions (see e.g., \cite{Giga,KS96,Po,Serrin}). It gives specific conditions under which the Navier-Stokes equations are well-behaved and weak solutions are unique. When dealing with the viscoelastic system \eqref{1}, the main difficulties lie in the partially dissipative structure as well as the nonlinear coupling between the velocity $\u$ and the deformation tensor $\F$.

 The remaining part of this paper is organized as follows.
 In Section 2, we first recall some important features of the incompressible viscoelastic system \eqref{1} and then state our main results (see Theorems \ref{main}, \ref{unique}). In Section 3, we introduce a suitable decomposition of the nonlinear system and provide an alternative proof on the global existence of smooth solutions near equilibrium based on that decomposition.  Section 4 is denoted to the proof of Theorem \ref{main}. We first derive decay estimates for the associated linearized problems of the decomposed systems and then establish the optimal decay rates for the original nonlinear problem. In the last Section 5, we prove Theorem \ref{unique} on the weak-strong uniqueness.


\section{Preliminaries and main results}
\setcounter{equation}{0}
\noindent

We denote by $L^p(\R^d)$, $W^{m,p}(\R^d)$ the usual Lebesgue and Sobolev spaces on $\mathbb{R}^d$, with norms $\|\cdot\|_{L^p}$, $\|\cdot\|_{W^{m,p}}$, respectively. For $p=2$, we denote $H^m(\R^d)= W^{m,2}(\R^d)$ with norm $\|\cdot \|_{H^m}$. The inner product on $L^2$ will be denoted by $(\cdot, \cdot)$. For simplicity, we do not distinguish functional spaces when scalar-valued or vector-valued
functions are involved. We denote by $C$ a generic positive constant throughout
this paper, which may vary at different places. Its special dependence will be indicated explicitly if necessary.

Below we recall some important properties on the structure of the incompressible viscoelastic system \eqref{1}.

First, we note that the incompressibility condition $\nabla \cdot \u=0$ can be exactly represented as
${\rm det}\F =1$. The following well-known result in \cite{Te} illustrates the
incompressible consistence of the viscoelastic system:
\bl\label{inc}  Let $(\u, \F)$ be a
solution of system \eqref{1}. If we assume that ${\rm det}\F_0 =1$, then we have
${\rm det}\F(t,x) =1$, for $t\geq 0$, $x\in \mathbb{R}^d$, $(d=2,3)$.
\el
 Besides, it was shown that the third equation in \eqref{1} has a div-curl structure of compensate compactness \cite{LLZ05,LW01}:
\bl\label{trans}
Let $(\u, \F)$ be a
solution of system \eqref{1}. If we assume that $ \nabla \cdot \F_0^T=0$ then we have
$\nabla \cdot \F^T(t,x)=0$, for $t\geq 0$, $x\in \mathbb{R}^d$, $(d=2,3)$.
\el
When we are interested in small classical solutions, it is convenient to introduce the strain tensor defined by
 \be
 \E=\F-\mathbb{I},
 \label{CH}
 \ee
 where $\mathbb{I}$ is the $d\times d$ identity matrix. Using \eqref{CH}, Lemma \ref{trans} and the following simple fact
\be
\nabla \cdot(\F\F^T)= \nabla \cdot \E + \nabla \cdot \E^T+  \nabla \cdot (\E\E^T)=  \nabla \cdot \E +  \nabla \cdot (\E\E^T),\non
\ee
the original system \eqref{1} can be re-written in terms of $(\u, \E)$:
\be\label{e1}
\begin{cases}
& \u_t+\u \cdot \nabla \u-\mu\Delta \u+\nabla p=\nabla \cdot \E+\nabla \cdot \left(\E \E^T\right), \\
& \nabla \cdot\u=0,\\
& \E_t+ \u\cdot \nabla \E= \nabla \u+ \nabla \u \E, \\
& \u(t,x)|_{t=0}=\u_0(x), \quad \E(t,x)|_{t=0}=\E_0(x)=\F_0(x)-\mathbb{I}.
\end{cases}
\ee%
In \cite{LLZ08}, the authors proved the following lemma, which  indicates that $\nabla \times \E$ is indeed a higher-order term:
\bl\label{curl}
Let $(\u, \E)$ be a solution of system \eqref{e1}. If we assume that
\be
\nabla_m \E_{0ij}-\nabla_j \E_{0im}=\E_{0lj}\nabla_l \E_{0im}-\E_{0lm}\nabla_l \E_{0ij},\non
\ee
then for $t \geq 0$, $x\in \mathbb{R}^d$,  $(d=2,3)$, it holds
\be
\nabla_m \E_{ij}(t,x)-\nabla_j \E_{im}(t,x)=\E_{lj}(t,x)\nabla_l \E_{im}(t,x)-\E_{lm}(t,x)\nabla_l \E_{ij}(t,x).\non
\ee
\el

\begin{remark}
The identity stated in Lemma \ref{curl} is actually the so-called Piola identity, which is equivalent to the commutation of the second derivative of the flow map with respect to the Lagrangian coordinate (see, for instance, \cite{SE,LLZ08}).
\end{remark}

Based on the above observations, in the remaining part of the paper, we will make the following assumptions on the initial data so that Lemmas \ref{inc}--\ref{curl} always apply:
\begin{itemize}
\item[(A1)] $\qquad {\rm det} (\E_0+\mathbb{I})=1, \quad  \nabla\cdot \u_0=0$,
\item[(A2)] $\qquad\nabla\cdot \E_0^T=0$,
\item[(A3)]  $\qquad \nabla_m \E_{0ij}-\nabla_j \E_{0im}=\E_{0lj}\nabla_l \E_{0im}-\E_{0lm}\nabla_l \E_{0ij}$.
\end{itemize}
\medskip

The first result of this paper reads as follows:
\bt[Optimal decay of global classical solutions]\label{main}
Suppose that $d=3$ and the initial
data $\u_0, \E_0\in L^1(\mathbb{R}^3)\cap H^k(\mathbb{R}^3)$ $(k\geq 2$ being an integer$)$ fulfill
the assumptions (A1)--(A3). If the initial data satisfy $\|\u_0\|_{H^2}+\|\E_0\|_{H^2}\leq \delta$ for certain sufficiently small $\delta>0$, then the Cauchy problem \eqref{e1} admits a unique global classical solution $(\u, \E)$ such that
\be
\begin{cases}
 \partial_t^j\nabla^\alpha \u \in L^\infty(0,T; H^{k-2j-|\alpha|}(\mathbb{R}^3))\cap L^2(0,T; H^{k-2j-|\alpha|+1}(\mathbb{R}^3)),\\
 \partial_t^j\nabla^\alpha \E \in L^\infty(0,T; H^{k-2j-|\alpha|}(\mathbb{R}^3)),
 \end{cases}
 \non
\ee
for all integer $j$ and multi-index $\alpha$ satisfying $2j+|\alpha|\leq k$.

For all $t \geq 0$, the following decay estimates hold
\bea
 \|\u (t)\|_{L^2}+\| \E(t) \|_{L^2}&\leq & C M(1+t)^{-\frac34},\label{deL2a}\\
 \|\nabla \u (t)\|_{H^1}+\|\nabla  \E(t) \|_{H^1} &\leq& C M(1+t)^{-\frac54},\label{deH2}
\eea
where $M=\|\u_0\|_{L^1\cap H^2}+ \|\E_0\|_{L^1\cap H^2}$.

Moreover, if the Fourier transforms of the initial data $(\u_0, \mathbf{n}_0)$ $($where $\mathbf{n}_0=\Lambda^{-1} \nabla \cdot \E_0$, and the operator $\Lambda$ is defined in \eqref{LLL} below$)$ also satisfy $|\widehat{u_{i0}}|\geq c_0$, $|\widehat{n_{i0}}|\geq c_0$ for $0\leq |\xi|<<1$, where the lower bound $c_0>0$ satisfies $c_0\sim O(\delta^\zeta)$ with $\zeta\in (0,1)$, then there exists a $t_0>>1$ such that
\be
\|\u (t)\|_{L^2}+\| \E(t) \|_{L^2}\geq C(1+t)^{-\frac34},\quad \forall\, t\geq t_0,\label{deL2b}
\ee
i.e., the $L^2$ decay rate \eqref{deL2a} is optimal.
\et
\begin{remark}
For the sake of simplicity, we only give the proof for the three dimensional case $(d=3)$ in the subsequent text. Under the same assumptions on the initial data as in Theorem \ref{main}, one can easily obtain the corresponding optimal decay estimates in the two dimensional case $(d=2)$, namely,
\bea
 \|\u (t)\|_{L^2}+\| \E(t) \|_{L^2} &\leq& CM(1+t)^{-\frac12},\non\\
 \|\nabla \u (t)\|_{H^1}+\|\nabla  \E(t) \|_{H^1} &\leq& CM(1+t)^{-1}.\non
\eea
\end{remark}

Next, we introduce the notion of weak solutions to the system \eqref{e1}. Denote $\F_j$ ($j=1,...,d$) be the columns of $\F$. Then the third equation in \eqref{1} for $\F$ can be written as
\be
\partial_t\F_j+\u\cdot\nabla\F_j=\F_j\cdot\nabla\u,\quad j=1,...,d.\label{F}
\ee
  Recalling Lemma \ref{trans}, we have $\nabla \cdot \F^T=0$, which yields that $\nabla \cdot \F_j=0$. Then  \eqref{F} can be further re-written in terms of $\E=\F-\mathbb{I}$ such that
 \be
 \partial_t\E_j+\nabla \cdot(\E_j\otimes\u-\u\otimes\E_j)=\nabla_j \u,\label{Ej}
  \ee
where $(a\otimes b)_{ij}=a_ib_j$.

On the other hand, we note that the basic energy law \eqref{belF} can be re-written in terms of $(\u, \E)$:
\be
\frac12\frac{d}{dt}\left(\|\u\|^2_{L^2}+\|\E\|_{L^2}^2\right)+\mu\|\nabla \u\|_{L^2}^2=0.\label{bel}
\ee
\begin{definition}\label{defw}
We say that the pair $(\u,\E)$ is a finite energy weak solution to the system \eqref{e1} in $(0,T)\times \R^d$, if the following hold:
 \bea
 && \u \in L^\infty(0,T; L^2(\R^d)), \quad \nabla \u\in L^2(0,T; L^2(\R^d)),\nonumber\\
 && \E=\F-\mathbb{I}\in L^\infty(0,T; L^2(\R^d)).\nonumber
 \eea
 The pair $(\u, \E)$ satisfies \eqref{e1} in the sense of distributions, i.e., $\nabla \cdot \u=0$  in the distributional sense and for all test functions $\mathbf{v}, \mathbf{w} \in  C^1([0,T]; \mathcal{D}(\R^d))$ with $\nabla \cdot \mathbf{v}=0$, we have
 \be
 \begin{split}
&\int_{\R^d}\u(t,\cdot)\cdot\mathbf{v}(t,\cdot)dx -\int_{\R^d}\u_0\cdot\mathbf{v}(0,\cdot)dx\\
&\ = \int_0^t\int_{\R^d}\left[\u\cdot\mathbf{v}_t+\left(\u\otimes\u-\E\E^T-\E\right):\nabla\mathbf{v}\right] dxd\tau\\
&\quad \ \ -\mu\int_0^t\int_{\R^d}\nabla\u:\nabla\mathbf{v} dxd\tau,\label{w1}
\end{split}
\ee
\be
\begin{split}
& \int_{\R^d}\E_j(t,\cdot)\cdot\mathbf{w}(t,\cdot)dx-\int_{\R^d}\E_j(0,\cdot)\cdot \mathbf{w}(0,\cdot)dx\\
&\ = \int_0^t\int_{\R^d}\big[\E_j\mathbf{w}_t+\left(\E_j\otimes\u-\u\otimes\E_j\right):\nabla\mathbf{w} -\u\cdot \nabla _j \mathbf{w}\big]dxd\tau,\label{w2}
\end{split}
\ee
for $j=1,...,d$ and $t\in [0,T]$. Moreover, the following energy inequality holds for a.e. $t\in [0,T]$, that is
\begin{equation}\label{w3}
\mathcal{E}(t)+\mu\int_0^t\|\nabla \u(s)\|_{L^2}^2ds
\le \mathcal{E}(0),
\end{equation}
where
\be
\mathcal{E}(t)=\frac12\left(\|\u(t)\|^2_{L^2}+\|\E(t)\|_{L^2}^2\right), \quad \mathcal{E}(0)=\frac12\left(\|\u_0\|^2_{L^2}+\|\E_0\|_{L^2}^2\right).\non
\ee
\end{definition}
Now we state our second result of this paper.

\bt[Weak-strong uniqueness]\label{unique}
Suppose $d=2,3$. Let $\u_0\in H^k(\mathbb{R}^d)$ and $\E_0=\F_0-\mathbb{I}\in H^k(\mathbb{R}^d)$ $(k\geq 3)$, satisfying the assumptions (A1)--(A3). Suppose that $(\hat{\u},\hat{\E})$ is a weak solution of system \eqref{e1} (in the sense of Definition \ref{defw}) in $(0,T)\times \R^d$ and $(\u,\E)$ is a strong solution emanating from the same initial data (e.g., the global solutions as in Proposition \ref{glo} below, or the local solutions as in \cite[Theorem 1]{LLZ08}). Then we have
$\hat{\u}\equiv\u$ and $\hat{\E}\equiv\E$ on the time interval of existence.
\et
%


\section{Well-posedness via a new decomposition method}
\setcounter{equation}{0}
\subsection{Decomposed system}\label{decsys}
\noindent
 We write the system \eqref{e1} into the following form
\be\label{e2}
\begin{cases}
& \u_t-\mu\Delta \u-\nabla \cdot \E=\mathbf{g}, \\
& \nabla \cdot\u=0,\\
& \E_t-\nabla \u = \mathbf{h}, \\
& \u(t,x)|_{t=0}=\u_0(x), \quad \E(t,x)|_{t=0}=\E_0(x)=\F_0(x)-\mathbb{I}.
\end{cases}
\ee
The nonlinearities $\mathbf{g}$ and $\mathbf{h}$ are given by
\be
\mathbf{g}=-\mathbb{P} (\u \cdot \nabla \u) + \mathbb{P}\left(\nabla \cdot \left(\E \E^T\right)\right), \quad \mathbf{h}= -\u\cdot \nabla \E+\nabla \u \E,\label{gh}
\ee
respectively, where
$$\mathbb{P}=\mathbb{I}-\Delta^{-1}(\nabla \otimes\nabla)$$
is the Leray projection operator.
\begin{remark}
Due to Lemma \ref{trans}, we note that $\nabla\cdot(\nabla \cdot\E)=\nabla \cdot(\nabla \cdot \E^T)=0$, which implies   $\mathbb{P}(\nabla\cdot \E)=\nabla \cdot \E$.
\end{remark}

  Let $\Lambda^s$ be the pseudo differential operator defined by
 \be
 \Lambda^s f=\mathcal{F}^{-1}(|\xi|^s\widehat{f}(\xi)),\quad s\in \mathbb{R}.\label{LLL}
 \ee
  In particular, it holds $\Lambda^2=-\Delta$.

  Then we introduce the following new variables
 \be
 \begin{cases}
 &\mathbf{n}=\Lambda^{-1}(\nabla \cdot\E),\\
 & \Omega=\Lambda^{-1}(\nabla \u -\nabla^T \u),\\
 &  \mathbb{E}= \E^T-\E.
 \end{cases}
 \label{newvar}
 \ee
It follows from \eqref{e2} that the pairs of new variables $(\mathbf{u}, \mathbf{n})$ and $(\Omega, \mathbb{E})$ satisfy the following nonlinear systems (i.e., the ``decomposed systems" mentioned in the introduction), respectively:
\be\label{un}
\begin{cases}
&\u_t-\mu\Delta \u-\Lambda \mathbf{n}=\mathbf{g},\\
&\mathbf{n}_t+\Lambda\u=\Lambda^{-1}\nabla \cdot \mathbf{h},
\end{cases}
\ee
and
\be\label{OE}
\begin{cases}
& \Omega_t-\mu\Delta \Omega-\Lambda \mathbb{E}=\Lambda^{-1}(\nabla \mathbf{g}-\nabla^T\mathbf{g})+\Lambda^{-1}\S,\\
& \mathbb{E}_t+\Lambda\Omega=  \mathbf{h}^T-\mathbf{h},
\end{cases}
\ee
In \eqref{OE}, we have used the following fact from Lemma \ref{curl} such that
\be
\begin{split}
(\nabla \nabla\cdot\E-\nabla^T \nabla\cdot\E)_{ij}
&= \nabla_j\nabla_k\E_{ik}-\nabla_i\nabla_k\E_{jk}\\
&= \nabla_k\nabla_j\E_{ik}-\nabla_k\nabla_i\E_{jk}\\
&= \nabla_k(\nabla_k \E_{ij}+\E_{lk}\nabla_l\E_{ij}-\E_{lj}\nabla_l\E_{ik})\\
&\quad\ -\nabla_k(\nabla_k \E_{ji}+ \E_{lk}\nabla_l\E_{ji}-\E_{li}\nabla_l\E_{jk})\\
&= \Delta (\E_{ij}-\E_{ji}) +\S_{ij},\non
\end{split}
\ee
where the higher-order antisymmetric matrix $\S$ is given by
\be
\S_{ij}=\nabla_k(\E_{lk}\nabla_l\E_{ij}-\E_{lj}\nabla_l E_{ik})-\nabla_k(E_{lk}\nabla_l\E_{ji}-\E_{li}\nabla_l\E_{jk}).\non
\ee

%
The following elementary estimates will be useful in the subsequent proofs.
\bl\label{pre}
Let the assumptions (A1)--(A3) be satisfied. The solution $\E$ to system \eqref{e1} satisfy the following estimates:
\bea
 \|\E\|_{L^2} &\leq& C\left(\|\mathbf{n}\|_{L^2}+\|\E\|_{H^2}\|\E\|_{L^2}\right),\label{EL2}\\
  \|\nabla \E\|_{L^2} &\leq&  C\left(\|\nabla \mathbf{n}\|_{L^2}+\|\E\|_{H^2}\|\nabla \E\|_{L^2}\right),\label{EnL2}\\
 \|\Delta \E\|_{L^2} & \leq&  C\left(\|\Delta \mathbb{E}\|_{L^2}+C\|\E\|_{H^2}\|\Delta\E\|_{L^2}\right),\label{EEE}
\eea
where $C$ is a constant that does not depend on $\E$.
\el
\begin{proof}
We deduce from the Hodge decomposition
\be \Delta \E=\nabla \nabla \cdot \E-\nabla \times\nabla \times \E,
\label{Hodge}
\ee
Lemma \ref{curl} and the Sobolev embedding theorem that
\be
\begin{split}
\|\E\|_{L^2}&=\|\Lambda^{-2}\Delta \E\|_{L^2}\\
&=\|\Lambda^{-2}(\nabla \nabla \cdot\E-\nabla \times\nabla \times \E)\|_{L^2}\\
&= \|\Lambda^{-1}\nabla \mathbf{n}-\Lambda^{-2} \nabla \times(\nabla \times \E)\|_{L^2}\\
&\leq C(\|\mathbf{n}\|_{L^2}+\|\nabla \E\|_{L^3}\|\E\|_{L^2})\\
&\leq C(\|\mathbf{n}\|_{L^2}+\|\E\|_{H^2} \|\E\|_{L^2}),\non
\end{split}
\ee
and
\be
\begin{split}
\|\nabla \E\|_{L^2}&\leq C\|\nabla \mathbf{n}-\Lambda^{-1}\nabla \times (\nabla \times \E)\|_{L^2}\\
&\leq  C\left(\|\nabla \mathbf{n}\|_{L^2}+\|\E\|_{L^\infty}\|\nabla \E\|_{L^2}\right)\\
&\leq C\left(\|\nabla \mathbf{n}\|_{L^2}+\|\E\|_{H^2} \|\nabla \E\|_{L^2}\right),\non
\end{split}
\ee
which yield the conclusions \eqref{EL2} and \eqref{EnL2}.

Next, from Lemmas \ref{trans}, \ref{curl}, we deduce that
\be
\begin{split}
\|\Delta \E\|_{L^2}&\leq \|\nabla \nabla \cdot \E\|_{L^2}+\|\nabla \times\nabla \times \E\|_{L^2}\\
&= \|\nabla \nabla \cdot \mathbb{E}\|_{L^2}+ \|\nabla \times\nabla \times \E\|_{L^2}\\
&\leq \left(\|\Delta \mathbb{E}\|_{L^2}+ \|\nabla \times\nabla \times \mathbb{E}\|_{L^2}\right) + \|\nabla \times\nabla \times \E\|_{L^2}\\
&\leq  C\|\Delta \mathbb{E}\|_{L^2}+C (\|\E\|_{L^\infty}\|\Delta \E\|_{L^2}+\|\nabla \E\|_{L^3}\|\nabla \E\|_{L^6})\\
&\leq C\left(\|\Delta \mathbb{E}\|_{L^2}+\|\E\|_{H^2}\|\Delta\E\|_{L^2}\right),\non
\end{split}
\ee
which gives \eqref{EEE}. The proof is complete.
\end{proof}

\subsection{Global strong solution revisited}

Global existence of smooth solutions near equilibrium to system \eqref{e1} has been proved in \cite[Theorem 2]{LLZ08}. In what follows, we provide an alternative proof based on the decomposed systems \eqref{un} and \eqref{OE} introduced in Section \ref{decsys}, which may have its own interest.

First, we present some simple estimates on the nonlinearities $\mathbf{g}$ and $\mathbf{h}$ in \eqref{e2}.

\bl\label{pre1}
Suppose that the assumptions (A1)--(A3) are satisfied. We have the following estimates on nonlinear terms $\mathbf{g}$ and $\mathbf{h}$:
\be
 \|\mathbf{g}\|_{L^2} \leq  C(\|\u\|_{H^1}+\|\E\|_{H^1})(\|\Delta \u\|_{L^2}+\|\Delta \E\|_{L^2}),\label{egL12}
 \ee
 \be
  \|\nabla \mathbf{g}\|_{L^2} \leq  C(\|\u\|_{H^2}+\|\E\|_{H^2})(\|\nabla^2\u\|_{L^2}+\|\nabla^2\E\|_{L^2}),\label{ggg}
 \ee
 \be
  \|\Lambda^{-1}\nabla \cdot \mathbf{h}\|_{L^2}+\|\mathbf{h}\|_{L^2} \leq C  (\|\u\|_{H^1}+\|\E\|_{H^1})(\|\Delta \u\|_{L^2}+\|\Delta \E\|_{L^2}),  \label{ehL12}
  \ee
  \be
 \|\nabla \Lambda^{-1}\nabla \cdot \mathbf{h}\|_{L^2} \leq C(\|\u\|_{H^2}+\|\E\|_{H^2})(\|\Delta \u\|_{L^2}+\|\Delta \E\|_{L^2}),\label{ehH1}
 \ee
where the constant $C$ does not depend on $(\u, \E)$.
\el
\begin{proof}
Using the Sobolev embedding theorem and the fact that the Leray-Hopf projection $\mathbb{P}$ is a bounded operator from  $L^2$ to $L^2$, we infer from the definitions of $\mathbf{g}$ and $\mathbf{h}$ that
\be
\begin{split}
\|\mathbf{g}\|_{L^2}&\leq C\|\u\|_{L^3}\|\nabla \u\|_{L^6}+C\|\E\|_{L^3}\|\nabla \E\|_{L^6}\\
&\leq C(\|\u\|_{H^1}+\|\E\|_{H^1})(\|\Delta \u\|_{L^2}+\|\Delta \E\|_{L^2}),\non
\end{split}
\ee
\be
\begin{split}
 \|\nabla \mathbf{g}\|_{L^2}
&\leq C(\|\u\|_{L^\infty}\|\nabla^2 \u\|_{L^2}+\|\nabla \u\|_{L^3}\|\nabla \u\|_{L^6})\\
&\quad \ + C(\|\E\|_{L^\infty}\|\nabla^2 \E\|_{L^2}+\|\nabla \E\|_{L^3}\|\nabla \E\|_{L^6})\\
&\leq C(\|\u\|_{H^2}+\|\E\|_{H^2})(\|\nabla^2\u\|_{L^2}+\|\nabla^2\E\|_{L^2}),\non
\end{split}
\ee
\be
\begin{split}
 \|\Lambda^{-1}\nabla \cdot \mathbf{h}\|_{L^2}+\|\mathbf{h}\|_{L^2}
 &\leq C \|\u\|_{L^3}\|\nabla \E\|_{L^6}+C\|\nabla \u\|_{L^6}\|\E\|_{L^3}\\
 &\leq C(\|\u\|_{H^1}+\|\E\|_{H^1})(\|\Delta \u\|_{L^2}+\|\Delta \E\|_{L^2}),\non
 \end{split}
\ee
\be
\begin{split}
\|\nabla \Lambda^{-1}\nabla \cdot \mathbf{h}\|_{L^2}
&\leq C(2\|\nabla \u\|_{L^3}\|\nabla \E\|_{L^6}+\|\u\|_{L^\infty}\|\nabla^2 \E\|_{L^2}+\|\E\|_{L^\infty}\|\nabla^2\u\|_{L^2})\\
&\leq C(\|\u\|_{H^2}+\|\E\|_{H^2})(\|\Delta \u\|_{L^2}+\|\Delta \E\|_{L^2}).\non
\end{split}
\ee
Hence, the proof is complete.
\end{proof}

Next, we derive some higher-order differential inequalities in terms of $(\u, \E)$ as well as the new variables $(\Omega, \mathbb{E})$:

\bl\label{diin}
Let $(\u, \E)$ be a smooth solution to system \eqref{e1}. Then the following inequalities hold:
\be
\begin{split}
& \frac12\frac{d}{dt}\left(\|\Delta \u\|_{L^2}^2+\|\Delta \E\|_{L^2}^2\right)
   +\mu\|\nabla \Delta \u\|_{L^2}^2\\
&\ \leq C\left(\|\u\|_{H^2}+\|\E\|_{H^2}\right)\left(\|\Delta \E\|^2_{L^2}+\|\nabla \u\|^2_{L^2}+\|\nabla \Delta \u\|_{L^2}^2\right),\label{h3}
\end{split}
\ee
and
\be
\begin{split}
 \frac{d}{dt}(\Lambda\Omega, \Delta \mathbb{E})+\frac12 \|\Delta \mathbb{E}\|_{L^2}^2
&\leq C\left(\|\nabla \u\|_{L^2}^2+\|\nabla \Delta \u\|_{L^2}^2\right)\\
&\quad\ + C\left(\|\u\|_{H^2}^2+\|\E\|_{H^2}^2\right)\left(\|\Delta\u\|^2_{L^2}+\|\Delta \E\|^2_{L^2}\right).
\label{h4}
\end{split}
\ee
\el
\begin{proof}
We note that the first differential inequality  \eqref{h3} has been obtained in \cite{LLZ08} (see (73) therein). Next, we denote $$\widetilde{\Omega}= \Lambda\Omega= \nabla \u-\nabla^T \u.$$ Then for the system \eqref{OE} in terms of $(\Omega, \mathbb{E})$, by a direct computation, we get
\be
\begin{split}
 \frac{d}{dt}(\widetilde{\Omega}, \Delta \mathbb{E}) &=  (\widetilde{\Omega}_t, \Delta \mathbb{E})+ (\widetilde{\Omega}, \Delta \mathbb{E}_t)\\
&= \mu(\Delta \widetilde{\Omega}, \Delta \mathbb{E})-\|\Delta \mathbb{E}\|_{L^2}^2+(\nabla \mathbf{g}-\nabla^T \mathbf{g}, \Delta \mathbb{E})+(\S, \Delta \mathbb{E})\\
&\quad -(\O, \Delta \O)+(\Delta \O, \mathbf{h}^T-\mathbf{h}).
\label{cross}
\end{split}
\ee
The Sobolev embedding theorem yields the following estimate on $\S$:
\be
\begin{split}
\|\S\|_{L^2} &\leq  \|\E\|_{L^\infty}\|\nabla^2 \E\|_{L^2}+\|\nabla \E\|_{L^3}\|\nabla \E\|_{L^6}\\
&\leq  C\|\E\|_{H^2}\|\nabla^2\E\|_{L^2}.\non
\end{split}
\ee
Using the above estimate, \eqref{ggg}, \eqref{ehL12} (see Lemma \ref{pre1}),  the H\"older inequality and Young's inequality, we infer from \eqref{cross} that
\be
\begin{split}
& \frac{d}{dt}(\widetilde{\Omega}, \Delta \mathbb{E})+\frac12\|\Delta \mathbb{E}\|_{L^2}^2\\
&\ \leq C\|\nabla \u\|_{H^2}^2+ C\left(\|\u\|_{H^2}^2+\|\E\|_{H^2}^2)(\|\nabla^2\u\|^2_{L^2}+\|\nabla^2\E\|^2_{L^2}\right),\non
\end{split}
\ee
which gives \eqref{h4}. The proof is compete.
\end{proof}

Now we prove the global existence result for initial data that are near equilibrium.
\begin{proposition}\label{glo}
Suppose that the initial
data $\u_0, \E_0\in H^k(\mathbb{R}^3)$ $(k\geq 2$ being an integer$)$ satisfy
the assumptions (A1)--(A3). There exists a sufficiently small constant $\delta_0\in (0,1)$ such that if $ \|\u_0\|_{H^2}+\|\E_0\|_{H^2}\leq \delta\in(0,\delta_0]$, then the system \eqref{e1} admits a unique global
strong solution $(\u, \E)$ such that
\be
\begin{cases}
 \partial_t^j\nabla^\alpha \u \in L^\infty(0,T; H^{k-2j-|\alpha|}(\mathbb{R}^3))\cap L^2(0,T; H^{k-2j-|\alpha|+1}(\mathbb{R}^3)),\\
 \partial_t^j\nabla^\alpha \E \in L^\infty(0,T; H^{k-2j-|\alpha|}(\mathbb{R}^3)),
 \end{cases}\label{regglo}
\ee
for all integers $j$ and multi-index $\alpha$ satisfying $2j+|\alpha|\leq k$.
Moreover, we have
\be
\|\u(t)\|_{H^2}+\|\E(t)\|_{H^2}\leq 4\delta, \quad \forall\, t\geq 0\label{stability}
\ee
and
\be
\int_0^{+\infty} \|\nabla \Delta \u(t)\|^2_{L^2} dt\leq C, \label{regcri}
\ee
where the constant $C$ depends on $\delta$.
\end{proposition}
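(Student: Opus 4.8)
The plan is the standard program for global well-posedness near equilibrium: local existence, a uniform-in-time a priori estimate in $H^2$, and a continuation argument. Local existence and uniqueness of a strong solution on a maximal interval $[0,T^*)$, together with an $H^2$-controlled blow-up criterion, is available from \cite{LLZ08}; the novelty is to close the a priori estimate using the decomposed systems \eqref{un}--\eqref{OE} and the hidden dissipation they expose. The guiding difficulty is that neither the basic energy law \eqref{bel} nor the second-order inequality \eqref{h3} contains dissipation for $\E$; such dissipation must be manufactured from the cross term of Lemma \ref{diin}.

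Concretely, I would introduce the Lyapunov functional
\[
\mathcal{L}(t)=\frac12\left(\|\u\|_{L^2}^2+\|\E\|_{L^2}^2\right)+\frac12\left(\|\Delta\u\|_{L^2}^2+\|\Delta\E\|_{L^2}^2\right)+\eta\,(\Lambda\Omega,\Delta\mathbb{E}),
\]
with $\eta>0$ a small parameter to be fixed. Since $\Lambda\Omega=\nabla\u-\nabla^T\u$ and $\mathbb{E}=\E^T-\E$, the cross term obeys $|(\Lambda\Omega,\Delta\mathbb{E})|\le 4\|\nabla\u\|_{L^2}\|\Delta\E\|_{L^2}$, which by interpolation is dominated by $\|\u\|_{L^2}^2+\|\Delta\u\|_{L^2}^2+\|\Delta\E\|_{L^2}^2$; hence for $\eta$ small, $\mathcal{L}(t)\sim \|\u\|_{H^2}^2+\|\E\|_{H^2}^2$ (using $\|f\|_{H^2}^2\sim\|f\|_{L^2}^2+\|\Delta f\|_{L^2}^2$ on $\R^3$). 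Adding \eqref{bel}, the inequality \eqref{h3}, and $\eta$ times \eqref{h4}, I would first choose $\eta\le\mu/(2C)$ so that the genuinely non-small residue $\eta C(\|\nabla\u\|_{L^2}^2+\|\nabla\Delta\u\|_{L^2}^2)$ produced by the right-hand side of \eqref{h4} is absorbed into the parabolic dissipation $\mu\|\nabla\u\|_{L^2}^2+\mu\|\nabla\Delta\u\|_{L^2}^2$, leaving a clean dissipation $\tfrac\mu2\|\nabla\u\|_{L^2}^2+\tfrac\mu2\|\nabla\Delta\u\|_{L^2}^2+\tfrac\eta2\|\Delta\mathbb{E}\|_{L^2}^2$ on the left.

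The crucial conversion is then to trade $\|\Delta\mathbb{E}\|_{L^2}^2$ for $\|\Delta\E\|_{L^2}^2$: under the smallness of $\|\E\|_{H^2}$, estimate \eqref{EEE} of Lemma \ref{pre} gives $\|\Delta\E\|_{L^2}\le 2C\|\Delta\mathbb{E}\|_{L^2}$, so the surviving dissipation controls, up to a constant $c_0>0$, the full quantity $\mathcal{D}(t):=\|\nabla\u\|_{L^2}^2+\|\nabla\Delta\u\|_{L^2}^2+\|\Delta\E\|_{L^2}^2$ (and $\|\Delta\u\|_{L^2}^2\le\tfrac12\mathcal{D}$ by interpolation). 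Writing $X(t)=\|\u\|_{H^2}+\|\E\|_{H^2}\sim\sqrt{\mathcal{L}}$, every remaining nonlinear term from \eqref{h3} and from the $\eta$-multiple of \eqref{h4} carries a prefactor $X$ or $X^2$ and a factor bounded by $\mathcal{D}$, so the combined estimate reads $\frac{d}{dt}\mathcal{L}(t)+c_0\,\mathcal{D}(t)\le C\sqrt{\mathcal{L}(t)}\,\mathcal{D}(t)$. Consequently, as long as $\sqrt{\mathcal{L}(t)}\le c_0/(2C)$ one has $\frac{d}{dt}\mathcal{L}+\tfrac{c_0}{2}\mathcal{D}\le0$, so $\mathcal{L}$ is non-increasing.

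Finally I would run a continuity argument: set $T_\ast=\sup\{t\le T^*:X(s)\le4\delta\ \forall\,s\le t\}$. For $\delta\le\delta_0$ small enough the threshold $\sqrt{\mathcal{L}}\le c_0/(2C)$ holds on $[0,T_\ast)$, hence $\mathcal{L}(t)\le\mathcal{L}(0)\le C_1\delta^2$; by the equivalence $\mathcal{L}\sim X^2$ (with constants close to $1$ for small $\eta$) this improves the bound to $X(t)\le2\delta<4\delta$, whence $T_\ast=T^*$ and, by the $H^2$ blow-up criterion, $T^*=+\infty$. This gives global existence and the stability bound \eqref{stability}; integrating $\frac{d}{dt}\mathcal{L}+\tfrac{c_0}{2}\mathcal{D}\le0$ over $(0,\infty)$ yields \eqref{regcri}. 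The higher-order regularity \eqref{regglo} for general $2j+|\alpha|\le k$ then follows by propagating the analogous energy estimates at order $k$, the nonlinear terms again controlled by the established $H^2$ smallness, while uniqueness follows from a standard energy estimate on the difference of two solutions. The main obstacle is precisely the parameter balancing of the second and third paragraphs: $\eta$ must be small enough both to preserve $\mathcal{L}\sim X^2$ and to absorb the non-dissipative residue of \eqref{h4}, yet the surviving fraction of $\|\Delta\mathbb{E}\|_{L^2}^2$ must still, through \eqref{EEE} and the Piola structure of Lemma \ref{curl}, regenerate genuine dissipation on $\Delta\E$.
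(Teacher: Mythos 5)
Your proposal is correct and follows essentially the same route as the paper: the functional $\mathcal{L}$ is exactly the paper's $G(t)$ in \eqref{Ge}, and the argument proceeds identically by combining \eqref{bel}, \eqref{h3}, and a small multiple of \eqref{h4}, using \eqref{EEE} to convert the $\|\Delta\mathbb{E}\|_{L^2}^2$ dissipation into genuine dissipation on $\Delta\E$, closing with a continuation argument, and invoking the local theory and blow-up criterion of \cite{LLZ08} for \eqref{regglo}. The only cosmetic difference is that the paper phrases the smallness condition as $C_4\bigl(G(t)+G^{1/2}(t)\bigr)<C_3$ rather than via your threshold $\sqrt{\mathcal{L}}\le c_0/(2C)$, which is an equivalent bookkeeping choice.
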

\begin{proof}
Denote
\be
G(t)=\frac12\left(\|\u\|_{L^2}^2+\|\E\|_{L^2}^2+\|\Delta\u\|_{L^2}^2+\|\Delta \E\|_{L^2}^2\right)+\kappa(\Lambda\Omega, \Delta \mathbb{E}),\label{Ge}
\ee
where $\kappa>0$ is a small constant to be determined later. We infer from \eqref{bel}, \eqref{h3} and \eqref{h4} that
\be
\begin{split}
& \frac{d}{dt} G(t)+(\mu-C_1\kappa) \left(\|\nabla \u\|_{L^2}^2+\|\nabla \Delta \u\|_{L^2}^2\right)+ \frac{\kappa}{2}\|\Delta \mathbb{E}\|_{L^2}^2\\
&\ \leq C_2\left(\|\u\|_{H^2}+\|\E\|_{H^2}+\|\u\|_{H^2}^2+\|\E\|_{H^2}^2\right)\\
&\quad \ \ \times\left(\|\Delta \E\|^2_{L^2}+\|\nabla \u\|^2_{L^2}+\|\nabla \Delta \u\|_{L^2}^2\right),\label{hi}
\end{split}
\ee
for some constants $C_1, C_2$ independent of the variables $\u$, $\E$, $\Omega$ and $\mathbb{E}$.

It easily follows from \eqref{newvar} that $\|\Delta \mathbb{E}\|_{L^2}\leq 2\|\Delta \E\|_{L^2}$ and $\|\Lambda\Omega\|_{L^2}\leq 2 \|\nabla \u\|_{L^2}$. Thus, we can take $\kappa>0$ sufficiently small in \eqref{Ge} such that
\be
\|\u\|_{H^2}^2+\|\E\|_{H^2}^2\geq G(t)\geq \frac14(\|\u\|_{H^2}^2+\|\E\|_{H^2}^2) \quad \text{and}\quad C_1\kappa\leq \frac{\mu}{2}.\label{Ge1}
\ee
On the other hand, we have shown the relation \eqref{EEE}, which combined with \eqref{hi} and \eqref{Ge1} yields that
\be
\begin{split}
& \frac{d}{dt} G(t)+ \Big[C_3 -C_4\left(G(t)+G^\frac12(t)\right)\Big]\left(\|\Delta \E\|^2_{L^2}+\|\nabla \u\|_{L^2}^2+\|\nabla \Delta \u\|_{L^2}^2\right)\\
&\ \ \leq 0,\label{hia}
\end{split}
\ee
where the constants $C_3, C_4$ may depend on $C_1, C_2$ and $\mu$.

By the classical continuation argument, we see that if $\delta_0\in (0, 1)$ is sufficiently small, e.g., it satisfies
$C_4(\delta_0^2+\delta_0)<C_3$, then for any $\delta\in (0, \delta_0]$, we can find certain $T_0>0$ depending on $\delta$ such that $$\sup_{t\in[0,T_0]}C_4\left(G(t)+G^\frac12(t)\right)< C_3$$ and it follows from the differential inequality \eqref{hia} that $G(t)\leq G(0)\leq \delta^2$ for $t\in[0,T_0]$.

Let $T^*=\sup T_0$ such that $C_4(G(T^*)+G^\frac12(T^*))= C_3$. If $T^*<+\infty$, then we have $G(T^*)\leq G(0)\leq \delta^2$ and $$C_4(G(T^*)+G^\frac12(T^*))\leq  C_4(\delta_0^2+\delta_0)<C_3,$$
 which yields  a contradiction with the definition of $T^*$. Thus, $T^*=+\infty$ and we can conclude that $G(t)\leq \delta^2 $ for all $t\geq 0$. This together with the relation \eqref{Ge1} yields the uniform estimate \eqref{stability}. Then integrating \eqref{hia} with respective to time, we conclude \eqref{regcri}.

 It follows from \eqref{stability} and \eqref{regcri} that for any $T\in (0,+\infty)$,
\be
\int_0^T \|\nabla \u(t)\|_{H^2}^2 dt\leq C_T,\label{nonbl}
\ee
where $C_T$ is a constant depending on $\|\u_0\|_{H^2}$, $\|\E_0\|_{H^2}$ and $T$. Then by the local existence result and the blow-up criterion obtained in \cite[Theorem 1]{LLZ08}, together with the estimate \eqref{nonbl}, we conclude that $(\u,\E)$ is a global classical solution to system \eqref{e1} satisfying the regularity \eqref{regglo}.  The proof is complete.
\end{proof}

\begin{remark}
Proposition \ref{glo} also indicates that the incompressible viscoelastic  system \eqref{e1} is locally Lyapunov stable (see \eqref{stability}).
\end{remark}

\section{Optimal $L^2$ decay rates}
\setcounter{equation}{0}

\subsection{Decay estimates for linearized systems}
\noindent We first study the decay properties of the associated linearized system of the decomposed system \eqref{un} given by
\be\label{L1}
\begin{cases}
& \u_{t}-\mu \Delta \u-\Lambda \mathbf{n}=0, \\
& \mathbf{n}_{t}+\Lambda \u =0.
\end{cases}
\ee
Denote
\be
\mathbf{W}_i=(u_i, n_i), \quad i=1,2,3, \non
\ee
where $u_i$ and $n_i$ is the $i$-th component of the vectors $\u$ and $\mathbf{n}$, respectively. Then the linear problem  \eqref{L1} can be written into the following form
\be
\begin{cases}
&\mathbf{W}_{it}= \mathcal{B}\mathbf{W}_i,\\
& \mathbf{W}_i(t,x)|_{t=0}=(   u_{i0}, n_{i0})^T,
\end{cases}
 \label{S1a}
\ee
for $i=1,2,3$, where $\mathcal{B}$ is a matrix-valued differential operator given by
\be
\mathcal{B}= \left( \begin{array}{cc}
 \mu\Delta & \Lambda\\
 -\Lambda & 0
 \end{array} \right).\label{B}
\ee
Thus, instead of studying the eigenvalues of the six-dimensional problem \eqref{L1} for $(\u, \mathbf{n})$, we only have to consider a two-dimensional one \eqref{S1a} with the simple operator $\mathcal{B}$.

Applying the Fourier transform to system \eqref{S1a}, we have for $i=1,2,3$,
\be
\widehat{\mathbf{W}}_{it}= \mathcal{A}(\xi)\widehat{\mathbf{W}}_i,\label{S2a}
\ee
where
$\widehat{\mathbf{W}}(t,\xi)=\mathcal{F}[\mathbf{W}(t,x)]$, $\xi=(\xi_1, \xi_2,\xi_3)^T$ and $\mathcal{A}(\xi)$ is the Fourier transformation of the differential operator $\mathcal{B}$ given by
\be
\mathcal{A}(\xi)= \left( \begin{array}{cc}
 -\mu|\xi|^2& |\xi| \\
  -|\xi| & 0
 \end{array} \right).\non
\ee
We can compute the eigenvalues of $\mathcal{A}$ by solving the characteristic equation
\be
{\rm det} (\lambda \mathbb{I}-\mathcal{A}(\xi))= \lambda^2 +\mu|\xi|^2 \lambda+|\xi|^2=0,\non
\ee
such that
\be\label{lambda1}
\lambda_{\pm}(\xi)=
\begin{cases}
-\frac{\mu}{2}|\xi|^2\pm \frac{\i}{2}\sqrt{-\mu^2|\xi|^4+4|\xi|^2},\quad\, \text{if}\ |\xi|<\frac{2}{\mu},\\
-\frac{\mu}{2}|\xi|^2\pm \frac{1}{2}\sqrt{\mu^2|\xi|^4-4|\xi|^2},\qquad\text{if}\ |\xi|\geq \frac{2}{\mu}.
\end{cases}
\ee
Then the associated linear semigroup $e^{t\mathcal{A}}$ can be expressed as
\be
e^{t\mathcal{A}(\xi)}=e^{\lambda_+ t} P_+ + e^{\lambda_- t} P_-,\non
\ee
where the projection operators $P_\pm$ are given by
 \be
 P_+=\frac{\mathcal{A}(\xi)-\lambda_- I}{\lambda_+-\lambda_-}, \quad P_-=\frac{\mathcal{A}(\xi)-\lambda_+ I}{\lambda_- -\lambda_+}.\non
 \ee

 As a result, we have the exact form of the Fourier transform
$\widehat{G}(t,\xi)$ for Green's function $G(t,x)=e^{t \mathcal{B}}$ such that
 \be
 \begin{split}
 \widehat{G}(t,\xi)&:= e^{t\mathcal{A}(\xi)}\\
 &\,= \left( \begin{array}{cc}
  -\mu|\xi|^2\frac{ (e^{\lambda_+ t}- e^{\lambda_- t})}{\lambda_+-\lambda_-}
  - \frac{ \lambda_- e^{\lambda_+ t}- \lambda_+e^{\lambda_- t}}{\lambda_+-\lambda_-}
  &   |\xi| \frac{ (e^{\lambda_+ t}- e^{\lambda_- t})}{\lambda_+-\lambda_-} \\
  -|\xi|\frac{ (e^{\lambda_+ t}- e^{\lambda_- t})}{\lambda_+-\lambda_-}
  & - \frac{ \lambda_- e^{\lambda_+ t}- \lambda_+e^{\lambda_- t}}{\lambda_+-\lambda_-}
 \end{array} \right).
 \label{G1}
 \end{split}
 \ee
 From the expression of $\lambda_\pm$ (see \eqref{lambda1}) and using the simple asymptotic facts
 \be
 \begin{split}
 & \lim_{|\xi|\to+\infty} -\frac{\mu}{2} |\xi|^2+\frac{\sqrt{\mu^2  |\xi|^4-4 |\xi|^2}}{2}\\
 &\quad =\lim_{|\xi|\to+\infty}\frac{2 |\xi|^2}{-\mu  |\xi|^2-\sqrt{\mu^2  |\xi|^4-4 |\xi|^2}}=-\frac{1}{\mu},\non
 \end{split}
 \ee
 \be
  \lim_{|\xi|\to+\infty} -\frac{\mu}{2} |\xi|^2-\frac{\sqrt{\mu^2  |\xi|^4-4 |\xi|^2}}{2}=-\infty,\non
 \ee
 we can verify the approximation of Green's function $G(t,x)$ for both low frequencies and high frequencies:
 \be
 \frac{ e^{\lambda_+ t}-e^{\lambda_- t}}{\lambda_+-\lambda_-}\sim
 \left\{ \begin{array}{cc} e^{-\frac{\mu}{2}|\xi|^2t} \frac{\sin bt}{b}& |\xi|<\eta,\\
 O(1) |\xi|^{-2}e ^{-\gamma t} & |\xi|\geq \eta,
 \end{array} \right.\label{la1}
 \ee
 \be
 \frac{ \lambda_- e^{\lambda_+ t}- \lambda_+e^{\lambda_- t}}{\lambda_+-\lambda_-}\sim
  \left\{ \begin{array}{cc}
 e^{-\frac{\mu}{2}|\xi|^2t} \left(-\cos b t+\frac{\mu}{2b}|\xi|^2 \sin bt\right) & |\xi|<\eta, \\
  O(1) e ^{-\gamma t} & |\xi|\geq \eta,
 \end{array} \right.\label{la2}
 \ee
 where $\eta\in (0, \frac{2}{\mu})$ is a constant that can be chosen sufficiently small,  the constant $\gamma>0$ may depend on $\mu$ and $\eta$, while $b>0$ is a real number such that
 \be
 b=\frac{1}{2}\sqrt{-\mu^2|\xi|^4+4|\xi|^2}\sim |\xi|+O(|\xi|^3), \quad \text{for} \ |\xi|<<1.\label{b}
 \ee
\bl\label{linear1}
 Let $(\u(t), \mathbf{n}(t))$ be a solution to the linear system \eqref{L1} with initial data $(\u_0, \mathbf{n}_0)\in H^l(\mathbb{R}^3)\cap L^1 (\mathbb{R}^3)$.

 (i) For $0\leq |\alpha|\leq l$ and $t\geq 0$, we have
 \be
 \begin{split}
  & \|(\partial_x^\alpha \u(t), \partial_x^\alpha \mathbf{n}(t))\|_{L^2}\\
  &\ \leq C(1+t)^{-\frac34-\frac{|\alpha|}{2}}\left(\|(\u_0, \mathbf{n}_0)\|_{L^1}+\|\partial_x^\alpha (\u_0, \mathbf{n}_0)\|_{L^2}\right).\label{decun}
  \end{split}
 \ee

  (ii) Assume that $\widehat{\u_0}$ and $\widehat{\mathbf{n}_0}$ satisfy $|\widehat{u_{i0}}|\geq c_0>0$, $|\widehat{n_{i0}}|\geq c_0$ for $0\leq |\xi|<<1$ with $c_0$ being a certain positive constant. Then for $t$ sufficiently large, it holds
 \be
  \|(\u(t), \mathbf{n}(t))\|_{L^2}\geq C(1+t)^{-\frac{3}{4}}, \label{op}
 \ee
 i.e., the $L^2$-decay rates are optimal.
\el
\begin{proof}
We only have to consider the subsystem for $\mathbf{W}_i(t)$ ($i=1,2,3$) (recall the system \eqref{S1a}).
It follows from \eqref{S2a} and \eqref{G1}--\eqref{la2} that for sufficiently small $\eta>0$, it holds
\bea
&& |\widehat{u_i}(t,\xi)|\non\\
&=& \left|\widehat{u_{i0}}\left(\frac{-\mu|\xi|^2 (e^{\lambda_+ t}- e^{\lambda_- t})}{\lambda_+-\lambda_-}
  - \frac{ \lambda_- e^{\lambda_+ t}- \lambda_+e^{\lambda_- t}}{\lambda_+-\lambda_-} \right)
  +\widehat{n_{i0}}|\xi|
   \frac{e^{\lambda_+ t}- e^{\lambda_- t}}{\lambda_+-\lambda_-}\right|\non\\
   &\sim&
 \left\{\begin{array}{cc}
 O(1)e^{-\frac{\mu}{2}|\xi|^2t} (|\widehat{u_{i0}}|+ |\widehat{n_{i0}}|),& |\xi|<\eta,\\
 O(1) e ^{-\gamma t} |\widehat{u_{i0}}|+ O(1)|\xi|^{-1} e ^{-\gamma t}  |\widehat{n_{i0}}|, & \ |\xi|\geq \eta,
 \end{array} \right.\label{u1}
\eea
and
\bea
|\widehat{n_i}(t,\xi)|
 &=& \left| \widehat{u_{i0}}\frac{-|\xi| (e^{\lambda_+ t}- e^{\lambda_- t})}{\lambda_+-\lambda_-}
 - \widehat{n_{i0}}\frac{ \lambda_- e^{\lambda_+ t}- \lambda_+e^{\lambda_- t}}{\lambda_+-\lambda_-}\right|\non\\
   &\sim&
 \left\{\begin{array}{cc}
 O(1)e^{-\frac{\mu}{2}|\xi|^2t}( |\widehat{u_{i0}}|+|\widehat{n_{i0}}|),& |\xi|<\eta,\\
 O(1) e ^{-\gamma t} |\xi|^{-1} |\widehat{u_{i0}}|+ O(1) e ^{-\gamma t}|\widehat{n_{i0}}|, & \ |\xi|\geq \eta.
 \end{array} \right.
 \label{n1}
\eea

Since $\u_0, \mathbf{n}_0\in L^1$, their Fourier transforms lie in $L^\infty$, i.e., $|\widehat{\u_0}|_{L^\infty} \leq C$, $|\widehat{\mathbf{n}_0}|_{L^\infty} \leq C$. Then we have for $0\leq |\alpha|\leq l$,
\be
\begin{split}
\|\widehat{\partial_x^\alpha u_i}(t,\xi)\|_{L^2}^2
&= \int_{|\xi|< \eta}  |\widehat{\partial_x^\alpha  u_i}(t,\xi)|^2 d\xi+\int_{|\xi|\geq \eta} |\widehat{\partial_x^\alpha  u_i}(t,\xi)|^2 d\xi\\
&\leq C\int_{|\xi|< \eta}  e^{-\mu|\xi|^2t} |\xi|^{2|\alpha|}(|\widehat{ u_{i0}}|^2+ |\widehat{ n_{i0}}|^2) d\xi\\
&\quad\ +C\int_{|\xi|\geq \eta} (1+|\xi|^{-2}) e ^{-2\gamma t} |\xi|^{2|\alpha|} (|\widehat{u_{i0}}|^2+ |\widehat{ n_{i0}}|^2)d\xi\\
&\leq C(1+t)^{-\frac32-|\alpha|}\left(\|(u_{i0}, n_{i0})\|_{L^1}^2+\|(\partial_x^\alpha u_{i0}, \partial_x^\alpha n_{i0})\|_{L^2}^2\right).\non
\end{split}
\ee
Estimate on $\|\widehat{\partial_x^\alpha n_i}(t,\xi)\|_{L^2}$ can be obtained in a similar manner. Thus, \eqref{decun} is proved.

Next, it follows from \eqref{la1}, \eqref{la2}  and \eqref{u1}  that
\be
\begin{split}
\|u_i(t,x)\|_{L^2}^2&=\|\widehat{u_i}(t,\xi)\|_{L^2}^2\\
&= \int_{|\xi|< \eta}  |\widehat{ n_i}(t,\xi)|^2 d\xi+\int_{|\xi|\geq \eta} |\widehat{  n_i}(t,\xi)|^2 d\xi\\
& \geq
 C\int_{|\xi|< \eta}  e^{-\mu|\xi|^2t}\Big[-\Big( \mu|\xi|^2\frac{\sin bt}{b}\Big)^2+ (\cos bt )^2 \Big] |\widehat{ u_{i0}}|^2
 d\xi\\
 &\quad\
 -C\int_{|\xi|< \eta}  e^{-\mu|\xi|^2t}|\xi|^2\Big(\frac{\sin bt}{b}\Big)^2 |\widehat{ n_{i0}}|^2 d\xi\\
&\quad\ -C e^{-2\gamma t}\int_{|\xi|\geq \eta} \left(|\widehat{ u_{i0}}|^2+ |\widehat{ n_{i0}}|^2\right) d\xi\\
&\geq C\int_{|\xi|< \eta} e^{-\mu|\xi|^2t}(\cos bt)^2  |\widehat{ u_{i0}}|^2 d\xi\\
&\quad\ -C\int_{|\xi|< \eta}  e^{-\mu|\xi|^2t} \frac{\mu^2|\xi|^4+|\xi|^2}{b^2} (\sin bt)^2 ( |\widehat{ u_{i0}}|^2+ |\widehat{ n_{i0}}|^2) d\xi\\
&\quad\ -C e^{-2\gamma t}\\
&:= CI_1-CI_2-C e^{-2\gamma t}.
\end{split}\label{op1}
\ee
On the other hand, we infer from \eqref{b} and the mean value formula that
\be
\begin{split}
 (\cos bt)^2 &= \cos^2(|\xi|t+O(|\xi|^3)t)\\
 &\geq  \frac12 \cos^2 (|\xi|t)- O(|\xi|^6t^2), \quad |\xi|<\eta,\label{cos}
 \end{split}
 \ee
 \be
 \begin{split}
 (\sin bt)^2 &= \sin^2(|\xi|t+O(|\xi|^3)t) \\
 &\leq  \sin^2 (|\xi|t)+ O(|\xi|^6t^2), \quad |\xi|<\eta. \label{sin}
 \end{split}
 \ee
Then using \eqref{cos}, we can estimate $I_1$ as follows
\be
\begin{split}
 I_1&=\int_{|\xi|< \eta} e^{-\mu|\xi|^2t}(\cos bt)^2  |\widehat{ u_{i0}}|^2 d\xi\\
&\geq \frac12 \int_{|\xi|< \eta} e^{-\mu|\xi|^2t} \cos^2 (|\xi|t)|\widehat{ u_{i0}}|^2 d\xi\\
&\quad \ -\int_{|\xi|< \eta} e^{-\mu|\xi|^2t}O(|\xi|^6t^2) |\widehat{ u_{i0}}|^2 d\xi.\label{I1}
\end{split}
\ee
For $t\geq t_0:= 2\pi\eta^{-1}$ sufficiently large, using the assumption $|\widehat{u_{i0}}| \geq c_0>0$, we get
\be
\begin{split}
&\int_{|\xi|< \eta} e^{-\mu|\xi|^2t} \cos^2 (|\xi|t)|\widehat{ u_{i0}}|^2 d\xi\\
&\ = C\int_0^{\eta} e^{-\mu|\xi|^2t} \cos^2 (|\xi|t)|\widehat{ u_{i0}}|^2 |\xi|^2 d |\xi|\\
&\ = Ct^{-\frac32}\int_0^{\eta t^\frac12} e^{-\mu|\zeta|^2} \cos^2 (|\zeta|t^\frac12)|\widehat{ u_{i0}}|^2 |\zeta|^2 d |\zeta|\\
&\ \geq Cc_0^2(1+t)^{-\frac32}\int_0^{\eta t^\frac12} e^{-\mu r^2} \cos^2 (r t^\frac12) r^2 dr\\
&\ \geq  Cc_0^2(1+t)^{-\frac32}\sum_{k=0}^{[\eta t \pi^{-1}]-1}\int_{k\pi t^{-\frac12}}^{(k\pi+\frac14\pi)t^{-\frac12}}
e^{-\mu r^2} \cos^2 \Big( k\pi+ \frac{\pi}{4}\Big) r^2 dr\\
&\ \geq  Cc_0^2(1+t)^{-\frac32}.\label{op3}
\end{split}
\ee
Besides, we have
\be
\int_{|\xi|< \eta} e^{-\mu|\xi|^2t}|\xi|^6t^2 |\widehat{ u_{i0}}|^2 d\xi\leq C(1+t)^{-\frac52},\label{op2}
\ee
which together with \eqref{I1} and \eqref{op3} implies that $$I_1\geq Cc_0^2 (1+t)^{-\frac32}, \quad \forall\, t>>1.$$
 Next, it follows from \eqref{b} and \eqref{sin} that
\be
\begin{split}
I_2&\leq C\int_{|\xi|<\eta}e^{-2\mu|\xi|^2t}\left(1+|\xi|^2\right)\left(|\xi|^2t^2+|\xi|^6t^2) (|\widehat{ u_{i0}}|^2+ |\widehat{ n_{i0}}|^2\right) d\xi\\
&\leq C(1+t)^{-\frac52}.\label{op4}
\end{split}
\ee
Hence, we can conclude from \eqref{op1} and the above estimates for $I_1, I_2$ that for sufficiently large $t$, it holds
\be
\|u_i(t,x)\|_{L^2}\geq Cc_0 (1+t)^{-\frac34}.\non
\ee
Using a similar argument, we can show the lower bound of time decay rate for $\mathbf{n}$ such that
 \be
 \|n_i(t,x)\|_{L^2}\geq Cc_0 (1+t)^{-\frac34}, \quad \forall\, t>>1. \non
 \ee
 Thus the estimate \eqref{op} follows. The proof is complete.
\end{proof}
We remark that the linearized system of \eqref{OE} for variables $(\Omega, \mathbb{E})$ has the following form:
\be
\label{LOE}
\begin{cases}
& \Omega_t-\mu\Delta \Omega-\Lambda \mathbb{E}=0,\\
& \mathbb{E}_t+\Lambda \Omega= 0.
\end{cases}
\ee
Denote
$
\mathbf{V}_{ij}=( \Omega_{ij}, \mathbb{E}_{ij})^T$, $ i, j=1,2,3$, $i\neq j$.
The linear problem  \eqref{LOE} can be written as follows
\be
\begin{cases}
&\mathbf{V}_{ijt}= \mathcal{B}\mathbf{V}_{ij},\\
& \mathbf{V}_{ij}(t,x)|_{t=0}=(  \Omega_{ij0}, \mathbb{E}_{ij0})^T,
\end{cases}
 \label{S1b}
\ee
where $\mathcal{B}$ is given by \eqref{B}.

As a consequence, we can obtain decay estimates that are exactly the same as those in Lemma \ref{linear1} for the linear problem \eqref{LOE}:
\bl\label{linear2}
 Let $(\Omega(t), \mathbb{E}(t))$ be a solution to the linear system \eqref{LOE} with initial data $(\Omega_0, \mathbb{E}_0)\in H^l(\mathbb{R}^3)\cap L^1 (\mathbb{R}^3)$. Then we have for $0\leq |\alpha|\leq l$ and $t\geq 0$ such that
 \be
 \begin{split}
 & \|(\partial_x^\alpha \Omega(t), \partial_x^\alpha \mathbb{E}(t))\|_{L^2}\\
 &\ \leq C(1+t)^{-\frac34-\frac{|\alpha|}{2}}(\|(\Omega_0, \mathbb{E}_0)\|_{L^1}+\|\partial_x^\alpha (\Omega_0, \mathbb{E}_0)\|_{L^2}).\non
 \end{split}
 \ee
 Moreover, assume that $\widehat{\Omega_0}$ and $\widehat{\mathbb{E}_0}$ satisfies $|\widehat{\Omega_0}|\geq c_0$, $|\widehat{\mathbb{E}_0}|\geq c_0$ for $0\leq |\xi|<<1$ with $c_0$ being a positive constant. Then for $t$ sufficiently large, it holds
 \be
 \|(\Omega(t), \mathbb{E}(t))\|_{L^2}\geq C(1+t)^{-\frac{3}{4}}. \non
 \ee
\el

\subsection{Decay estimates for the nonlinear system}

Now we proceed to prove  Theorem \ref{main}. The proof consists of several steps. Below we always assume that $(\u, \E)$ is the global smooth solution to \eqref{e1} obtained in Proposition \ref{glo} with sufficiently small $\delta$.

\bl\label{prea}
Under the assumptions of Proposition \ref{glo}, we have the following estimates on the nonlinear terms $\mathbf{g}$ and $\mathbf{h}$:
\be
  \|\widehat{\mathbf{g}}\|_{L^\infty_\xi}
  \leq C\|\u\|_{L^2}\|\nabla \u \|_{L^2}+C\|\E\|_{L^2}\|\nabla \E\|_{L^2},\label{egg1}
 \ee
 \be
  \|\widehat{\Lambda^{-1}\nabla \cdot \mathbf{h}}\|_{L^\infty_\xi}
  \leq  C\|\u\|_{L^2}\|\nabla \E \|_{L^2}+C\|\E\|_{L^2}\|\nabla \u\|_{L^2}.
 \label{ehh1}
\ee
\el
\begin{proof}

We infer from the expression \eqref{gh}  that
\be
\widehat{\mathbf{g}}(\xi)= \Big(\mathbb{I}-\frac{\xi\otimes\xi}{|\xi|^2}\Big)\Big[-\widehat{\u\cdot \nabla \u}(\xi)+\widehat{\nabla \cdot(\E\E^T)}(\xi)\Big].\non
\ee
As a result,
\be
\begin{split}
 \|\widehat{\mathbf{g}}(\xi)\|_{L^\infty_\xi}
 &\leq  C\|\widehat{\u\cdot \nabla \u}(\xi)\|_{L^\infty_\xi}+\|\widehat{\nabla \cdot(\E\E^T)}(\xi)\|_{L^\infty_\xi}\\
&\leq C\|\u\cdot\nabla \u\|_{L^1}+ C\|\nabla \cdot(\E\E^T)\|_{L^1}\\
&\leq C\|\u\|_{L^2}\|\nabla \u \|_{L^2}+C\|\E\|_{L^2}\|\nabla \E\|_{L^2},\non
\end{split}
\ee
which yields the conclusion \eqref{egg1}. The estimate \eqref{ehh1} can be obtained in a similar way. The proof is complete.
\end{proof}

Now we introduce the following functions (also compare with \eqref{Ge})
\bea && H(t)=\frac12(\|\Delta \u\|_{L^2}^2+\|\Delta \E\|_{L^2}^2)+\kappa (\Lambda \Omega, \Delta \mathbb{E}), \non\\
&& \widetilde{H}(t)=\sup_{0\leq s\leq t} (1+s)^{\frac52}\Big( H(t)+\frac12\|\nabla \u\|_{L^2}^2+\frac12\|\nabla \E\|_{L^2}^2\Big),\non
\eea
where $\kappa>0$ is a sufficiently small constant such that
\be
\begin{split}
\|\nabla  \u\|_{H^1}^2+\|\nabla  \E\|_{H^1}^2
&\geq \frac12\|\nabla \u\|_{L^2}^2 +\frac12\|\nabla \E\|_{L^2}^2+ H(t)\\
&\geq \frac14\left(\|\nabla \u\|_{H^1}^2+\|\nabla \E\|_{H^1}^2\right).
\label{HH}
\end{split}
\ee

Then we have the following estimates
\bl\label{h1u}
Under the assumptions of Proposition \ref{glo}, for any $t\geq 0$, the global solution $(\u, \E)$ satisfies
\be
\begin{split}
& \| \u (t)\|_{L^2}+\| \E(t) \|_{L^2}\\
&\ \ \leq C(1+t)^{-\frac34}\left(\|\u_0\|_{L^1\cap L^2}+ \|\E_0\|_{L^1\cap L^2}+ \delta \widetilde{H}^\frac12(t)\right),\label{L2uE}
\end{split}
\ee
\be
\begin{split}
& \|\nabla \u (t)\|_{L^2}+\|\nabla \E(t) \|_{L^2}\\
&\ \ \leq C(1+t)^{-\frac54}\left(\|\u_0\|_{L^1\cap H^1}+ \|\E_0\|_{L^1\cap H^1}+ \delta \widetilde{H}^\frac12(t)\right).\label{H1uE}
\end{split}
\ee
\el
\begin{proof}
It follows from Lemma \ref{pre} that if $\delta$ is sufficiently small, then we have the following equivalent relations
\be
\|\E\|_{L^2}\approx\|\mathbf{n}\|_{L^2},\quad \|\nabla \E\|_{L^2}\approx\|\nabla \mathbf{n}\|_{L^2},\label{eqEn}
\ee
Recalling \eqref{un}, we only have to consider the nonlinear system for the vector $\mathbf{W}_i=(u_i, n_i)^T$ ($i=1,2,3$)
\be
\begin{cases}
& \mathbf{W}_{it}=\mathcal{B}\mathbf{W}_i+ \mathbf{f}_i,\\
& \mathbf{W}_i(t,x)|_{t=0}=\mathbf{W}_{i0}:=(u_{i0}, \Lambda^{-1}(\nabla \cdot \E)_{i0})^T,
\end{cases}\non
\ee
where $$\mathbf{f}_i=(g_i, \Lambda^{-1}(\nabla \cdot \mathbf{h})_i)^T$$
(see \eqref{gh} for the definition of $\mathbf{g}$ and $\mathbf{h}$).

From the Duhamel's principle, we have
\be
\mathbf{W}_i(t)=e^{t\mathcal{B}} \mathbf{W}_{i0}+\int_0^t e^{(t-s)\mathcal{B}} \mathbf{f}_i(s) ds.\non
\ee
It follows from the linear decay estimate \eqref{decun} (see Lemma \ref{linear1}) that
\be
\begin{split}
 \|\mathbf{W}_i(t)\|_{L^2} &\leq  C (1+t)^{-\frac34}\|\mathbf{W}_{i0}\|_{L^1\cap L^2}\\
 &\quad \ +C \int_0^t (1+t-s)^{-\frac34}\left(\|\widehat{\mathbf{f}_i}(s)\|_{L^\infty_\xi}+\|\mathbf{f}_i(s)\|_{L^2}\right)ds, \non
 \end{split}
 \ee
 \be
 \begin{split}
 \|\nabla \mathbf{W}_i(t)\|_{L^2}
 & \leq C (1+t)^{-\frac54}\|\mathbf{W}_{i0}\|_{L^1\cap H^1}\\
 &\quad \ +C \int_0^t (1+t-s)^{-\frac54}\left(\|\widehat{\mathbf{f}_i}(s)\|_{L^\infty_\xi}+\|\nabla \mathbf{f}_i(s)\|_{L^2}\right)ds, \non
 \end{split}
\ee
which imply that
\be
\begin{split}
& \|( \u(t), \mathbf{n}(t))\|_{L^2}\\
&\leq C (1+t)^{-\frac34}\|(\u_0, \E_0)\|_{L^1\cap L^2}\\
&\quad  +C \int_0^t (1+t-s)^{-\frac34}\left(\|(\widehat{\mathbf{g}}, \widehat{\Lambda^{-1}\nabla \cdot \mathbf{h}})(s)\|_{L^\infty_\xi}+\| (\mathbf{g}, \Lambda^{-1}\nabla \cdot \mathbf{h})(s)\|_{L^2}\right)ds,\label{nes0}
\end{split}
\ee
and
\be
\begin{split}
& \|(\nabla \u(t), \nabla \mathbf{n}(t))\|_{L^2}\\
&\leq C (1+t)^{-\frac54}\|(\u_0, \E_0)\|_{L^1\cap H^1}\\
&\quad\ +C \int_0^t (1+t-s)^{-\frac54}\left(\|(\widehat{\mathbf{g}}, \widehat{\Lambda^{-1}\nabla \cdot \mathbf{h}})(s)\|_{L^\infty_\xi}+\|\nabla (\mathbf{g}, \Lambda^{-1}\nabla \cdot \mathbf{h})(s)\|_{L^2}\right)ds.\label{nes}
\end{split}
\ee
By the estimates \eqref{stability}, \eqref{nes0},  Lemmas \ref{pre1}, \ref{prea} and the definition of $\widetilde{H}(t)$, we have
\be
\begin{split}
& \|(\u(t), \mathbf{n}(t))\|_{L^2}\\
&\leq C (1+t)^{-\frac34}\|(\u_0, \E_0)\|_{L^1\cap L^2}\\
&\quad\ + C\delta \int_0^t (1+t-s)^{-\frac34}\left(\|\Delta \u(s)\|_{L^2}+\|\nabla \u(s)\|_{L^2}\right)ds\\
&\quad\ + C\delta \int_0^t (1+t-s)^{-\frac34}\left(\|\Delta \E(s)\|_{L^2}+\|\nabla \E(s)\|_{L^2}\right)ds\\
&\leq C (1+t)^{-\frac34}\|(\u_0, \E_0)\|_{L^1\cap L^2} +C\delta \int_0^t (1+t-s)^{-\frac34}(1+s)^{-\frac54}\widetilde{H}^\frac12(s) ds\\
&\leq C (1+t)^{-\frac34}\|(\u_0, \E_0)\|_{L^1\cap L^2} +C\delta \widetilde{H}^\frac12(t)\int_0^t (1+t-s)^{-\frac34}(1+s)^{-\frac54} ds\\
&\leq C (1+t)^{-\frac34}\left(\|(\u_0, \E_0)\|_{L^1\cap L^2}+\delta \widetilde{H}^\frac12(t)\right),\label{H1unaa}
\end{split}
\ee
where in the last step we have used the following elementary inequality (see e.g., \cite[Lemma 5.2.7]{Z04})
$$ \int_0^t(1+t-s)^{-\gamma}(1+s)^{-\beta} ds\leq C(1+t)^{-\gamma}, \quad \forall\, t\geq 0,$$
for $\beta>1$, $\beta\geq \gamma\geq 0$.

Similarly, we can deduce from \eqref{nes} that
\be
\begin{split}
& \|(\nabla \u(t), \nabla \mathbf{n}(t))\|_{L^2}\\
&\ \leq C (1+t)^{-\frac54}\|(\u_0, \E_0)\|_{L^1\cap H^1} +C\delta \int_0^t (1+t-s)^{-\frac54}(1+s)^{-\frac54}\widetilde{H}^\frac12(s) ds\\
&\ \leq C (1+t)^{-\frac54}\left(\|(\u_0, \E_0)\|_{L^1\cap H^1}+\delta \widetilde{H}^\frac12(t)\right),\label{H1una}
\end{split}
\ee
Then we can conclude \eqref{L2uE} and \eqref{H1uE} from the estimates \eqref{H1unaa}, \eqref{H1una} and  the relation \eqref{eqEn}. The proof is complete.
\end{proof}
Now we are in a position to prove the $L^2$-decay rate estimate.
\begin{proposition}\label{decayH2}
Under the assumptions of Proposition \ref{glo}, if $\delta>0$ is sufficiently small, then for $t\geq 0$, the global solution $(\u, \E)$ to system \eqref{e1} satisfies the following decay estimates:
\be
 \|\u (t)\|_{L^2}+\| \E(t) \|_{L^2}\leq CM(1+t)^{-\frac34},\label{esL2}
 \ee
 \be
 \|\nabla \u (t)\|_{H^1}+\|\nabla  \E(t) \|_{H^1} \leq CM(1+t)^{-\frac54}.\label{esH2}
\ee
Moreover, we have the $L^p$ decay rate
\be
\label{esLp}
 \|\u (t)\|_{L^p}+\| \E(t) \|_{L^p}\leq
  \begin{cases}
  &CM(1+t)^{-\frac32\big(1-\frac1p\big)}, \quad \ p\in[2,6],\\
  &CM(1+t)^{-\frac54}, \qquad \quad\ \  p\in [6,\infty],
  \end{cases}
\ee
where $M=\|\u_0\|_{L^1\cap H^2}+ \|\E_0\|_{L^1\cap H^2}$.
\end{proposition}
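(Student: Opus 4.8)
The plan is to show that the top-order energy $\mathcal{H}(t):=H(t)+\frac12\|\nabla\u\|_{L^2}^2+\frac12\|\nabla\E\|_{L^2}^2$, which by \eqref{HH} is equivalent to $\|\nabla\u\|_{H^1}^2+\|\nabla\E\|_{H^1}^2$, decays like $(1+t)^{-5/2}$, and then to close the estimate by a bootstrap on $\widetilde H(t)$ combined with the lower-order bounds of Lemma \ref{h1u}.

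First I would derive a Lyapunov differential inequality for $\mathcal{H}$. Besides \eqref{bel}, \eqref{h3} and \eqref{h4}, I need a first-order energy identity: applying $\nabla$ to \eqref{e2}, testing against $(\nabla\u,\nabla\E)$, and using that the coupling terms $\nabla\cdot\E$ and $\nabla\u$ cancel in the energy exactly as in \eqref{bel}, one gets $\frac12\frac{d}{dt}(\|\nabla\u\|_{L^2}^2+\|\nabla\E\|_{L^2}^2)+\mu\|\nabla^2\u\|_{L^2}^2=(\nabla\mathbf{g},\nabla\u)+(\nabla\mathbf{h},\nabla\E)$, whose right-hand side is bounded by $C\delta$ times higher-order dissipative norms via Lemma \ref{pre1}. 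Combining this with \eqref{bel}, \eqref{h3}, \eqref{h4} with $\kappa$ small, using \eqref{stability} (so $\|\u\|_{H^2}+\|\E\|_{H^2}\le 4\delta$) to absorb the nonlinear contributions for $\delta$ small, and invoking \eqref{EEE} to replace $\|\Delta\mathbb{E}\|_{L^2}^2$ by $\|\Delta\E\|_{L^2}^2$, I obtain $\frac{d}{dt}\mathcal{H}(t)+c_0\mathcal{D}(t)\le 0$ with $\mathcal{D}(t)=\|\nabla\u\|_{L^2}^2+\|\Delta\u\|_{L^2}^2+\|\nabla\Delta\u\|_{L^2}^2+\|\Delta\E\|_{L^2}^2$.

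The crucial observation, and the main obstacle, is that $\mathcal{D}(t)$ controls every term of $\mathcal{H}(t)$ except $\|\nabla\E\|_{L^2}^2$: this is precisely the lack of dissipation on $\E$ at the first-derivative level, the damping reaching $\E$ only at second order through $\mathbb{E}$ in \eqref{h4} and the Hodge/curl relation \eqref{EEE}. Hence one only has $\mathcal{D}(t)\ge c_1(\mathcal{H}(t)-\|\nabla\E\|_{L^2}^2)$, so the inequality does not close by itself. I resolve this by writing $\frac{d}{dt}\mathcal{H}+c_0c_1\mathcal{H}\le c_0c_1\|\nabla\E\|_{L^2}^2$ and feeding in the already-established, slower first-order decay from \eqref{H1uE}, namely $\|\nabla\E(s)\|_{L^2}^2\le C(1+s)^{-5/2}(M^2+\delta^2\widetilde H(s))$. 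Gronwall's inequality, together with $\int_0^t e^{-c(t-s)}(1+s)^{-5/2}\,ds\le C(1+t)^{-5/2}$, $\mathcal{H}(0)\le CM^2$, and the fact that $\widetilde H$ is nondecreasing, then yields $\mathcal{H}(t)\le C(1+t)^{-5/2}(M^2+\delta^2\widetilde H(t))$.

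Finally I would close the bootstrap: multiplying by $(1+t)^{5/2}$ and taking the supremum over $[0,t]$ gives $\widetilde H(t)\le C(M^2+\delta^2\widetilde H(t))$; since $\widetilde H(t)$ is finite for each $t$ by the regularity in Proposition \ref{glo}, choosing $\delta$ small enough that $C\delta^2\le\frac12$ lets me absorb the last term and conclude $\widetilde H(t)\le CM^2$ uniformly in $t$. Substituting this back into \eqref{L2uE} and \eqref{H1uE} yields \eqref{esL2} and the $\dot H^1$ part of \eqref{esH2}, while $\mathcal{H}\approx\|\nabla\u\|_{H^1}^2+\|\nabla\E\|_{H^1}^2$ from \eqref{HH} gives the $\dot H^2$ part of \eqref{esH2}. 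The $L^p$ rates \eqref{esLp} then follow by Gagliardo–Nirenberg interpolation: for $p\in[2,6]$ one uses $\|f\|_{L^p}\le C\|f\|_{L^2}^{1-\theta}\|\nabla f\|_{L^2}^{\theta}$ with $\theta=3(\frac12-\frac1p)$, producing the exponent $-\frac32(1-\frac1p)$, and for $p\in[6,\infty]$ one interpolates between $\|f\|_{L^6}\le C\|\nabla f\|_{L^2}$ and $\|f\|_{L^\infty}\le C\|\nabla f\|_{L^2}^{1/2}\|\Delta f\|_{L^2}^{1/2}$, both giving $(1+t)^{-5/4}$.
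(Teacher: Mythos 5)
Your overall architecture is the same as the paper's: a Lyapunov functional built from \eqref{h3}, \eqref{h4} and the cross term $\kappa(\Lambda\Omega,\Delta\mathbb{E})$, exponentially damped modulo a first-order forcing term, closed by Gronwall against the Duhamel bounds of Lemma \ref{h1u}, a bootstrap absorbing $\delta^2\widetilde H(t)$, and Gagliardo--Nirenberg for the $L^p$ rates; those latter steps of yours are correct. The genuine flaw is the intermediate claim $\frac{d}{dt}\mathcal{H}(t)+c_0\mathcal{D}(t)\le 0$ with $\|\nabla\u\|_{L^2}^2$ included in the dissipation $\mathcal{D}$. None of the identities you invoke can produce $\|\nabla\u\|_{L^2}^2$ on the left-hand side: your first-order identity yields only $\mu\|\nabla^2\u\|_{L^2}^2$, \eqref{h3} and \eqref{h4} yield $\mu\|\nabla\Delta\u\|_{L^2}^2$ and $\tfrac{\kappa}{2}\|\Delta\mathbb{E}\|_{L^2}^2$ while actually placing $+C\|\nabla\u\|_{L^2}^2$ on the \emph{right}, and the only source of $\|\nabla\u\|_{L^2}^2$ dissipation is \eqref{bel}, where it is attached to the time derivative of the $L^2$ energy --- a quantity that decays only algebraically and therefore cannot sit inside an exponentially damped functional. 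Indeed the claimed inequality is false, not merely underivable: at the linearized level, for data concentrated at frequency $|\xi|=\epsilon$ with the energy in the $\u$-component, one has $\frac{d}{dt}\mathcal{H}\sim -\mu\epsilon^4\|\u\|_{L^2}^2$ while $c_0\mathcal{D}\ge c_0\epsilon^2\|\u\|_{L^2}^2$, so the sum is positive for $\epsilon$ small. This is precisely the absence of low-frequency damping on $\u$ that forces the paper to keep $\|\nabla\u\|_{L^2}^2$ as a forcing term in \eqref{HHH}.

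The error propagates into your reduction step: since $\mathcal{D}$ cannot contain $\|\nabla\u\|_{L^2}^2$, the dissipation fails to control \emph{both} first-order pieces of $\mathcal{H}$, not just $\|\nabla\E\|_{L^2}^2$, and the correct conclusion is $\frac{d}{dt}\mathcal{H}+c\,\mathcal{H}\le C\bigl(\|\nabla\u\|_{L^2}^2+\|\nabla\E\|_{L^2}^2\bigr)$. Fortunately this is a harmless repair: estimate \eqref{H1uE} bounds $\|\nabla\u\|_{L^2}$ and $\|\nabla\E\|_{L^2}$ at the identical rate $(1+t)^{-5/4}\bigl(M+\delta\widetilde H^{1/2}(t)\bigr)$, so your Gronwall computation, the convolution estimate, and the absorption of $C\delta^2\widetilde H(t)$ go through verbatim with the enlarged forcing, yielding $\widetilde H(t)\le CM^2$ exactly as in the paper's passage from \eqref{HHH} to \eqref{esHH}. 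With that single correction your proof becomes essentially the paper's, the only cosmetic difference being that you place the first-order norms inside the damped functional (via your extra first-order energy identity, which is valid after integrating by parts and using Lemma \ref{pre1}), whereas the paper damps only the second-order functional $H(t)$ and reinstates the first-order terms after Gronwall.
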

\begin{proof}
We infer from Lemma \ref{diin}, \eqref{stability} (see Proposition \ref{glo}) and in particular the relation \eqref{EEE} that
\be
\begin{split}
& \frac{d}{dt}H(t)+ (\mu-C\kappa)\|\nabla \Delta \u\|_{L^2}^2+\frac12\|\Delta \mathbb{E}\|_{L^2}^2\\
&\ \leq C\kappa\|\nabla\u\|_{L^2}^2+C(\delta+\delta^2) \left(\|\Delta \E\|_{L^2}^2+\|\nabla \u\|_{L^2}^2+\|\nabla \Delta \u\|_{L^2}^2\right)\\
&\ \leq C\kappa\|\nabla\u\|_{L^2}^2+ C(\delta+\delta^2)\Big( \frac{C}{1-C\delta}\|\Delta \mathbb{E}\|_{L^2}^2+\|\nabla \u\|_{L^2}^2+\|\nabla \Delta \u\|_{L^2}^2\Big).\label{kkk}
\end{split}
\ee
As a result, if $\kappa, \delta>0$ are sufficiently small, we infer from \eqref{kkk}, the interpolation inequality $
 \|\Delta \u\|_{L^2}^2\leq \|\nabla \u\|_{L^2}\|\nabla \Delta \u\|_{L^2}$ and the relation \eqref{EEE} that there exist constants  $\kappa_1, C$ such that
\be
\frac{d}{dt}H(t)+\kappa_1 H(t)\leq C\|\nabla \u\|_{L^2}^2.\label{HHH}
\ee
Applying the Gronwall inequality to \eqref{HHH} and using \eqref{H1uE} (see Lemma \ref{h1u}), we have
\be
\begin{split}
& H(t)+\left(\frac12\|\nabla \u(t)\|_{L^2}^2+\frac12\|\nabla \E(t)\|_{L^2}^2\right)\\
&\ \leq H(0)e^{-\kappa_1 t} + C e^{-\kappa_1 t}\int_0^t e^{\kappa_1 s}\|\nabla \u(s)\|_{L^2}^2ds\\
&\quad\ +\left(\frac12\|\nabla \u(t)\|_{L^2}^2+\frac12\|\nabla \E(t)\|_{L^2}^2\right)\\
&\ \leq H(0)e^{-\kappa_1 t} \\
&\quad \ + C e^{-\kappa_1 t}\int_0^t e^{\kappa_1 s}(1+s)^{-\frac52}\Big(\|\u_0\|_{L^1\cap H^1}^2+ \|\E_0\|_{L^1\cap H^1}^2+ \delta^2 \widetilde{H}(s)\Big)dx\\
&\quad \ + C(1+t)^{-\frac52}\left(\|\u_0\|_{L^1\cap H^1}^2+ \|\E_0\|_{L^1\cap H^1}^2+ \delta^2 \widetilde{H}(t)\right)\\
&\ \leq H(0)e^{-\kappa_1 t}\\
&\quad \ +C\left(\|\u_0\|_{L^1\cap H^1}^2+ \|\E_0\|_{L^1\cap H^1}^2+ \delta^2 \widetilde{H}(t)\right)\int_0^{t}e^{-\kappa_1 (t-s)}(1+s)^{-\frac52}ds\\
&\quad \  + C(1+t)^{-\frac52}\left(\|\u_0\|_{L^1\cap H^1}^2+ \|\E_0\|_{L^1\cap H^1}^2+ \delta^2 \widetilde{H}(t)\right).\label{HHHa}
\end{split}
\ee
Since
\be
\begin{split}
& \int_0^{t}e^{-\kappa_1 (t-s)}(1+s)^{-\frac52}ds \\
&\ \leq \int_0^{\frac{t}{2}}e^{-\kappa_1 (t-s)}(1+s)^{-\frac52}ds+\int_{\frac{t}{2}}^te^{-\kappa_1 (t-s)}(1+s)^{-\frac52}ds\\
&\ \leq e^{-\frac{\kappa_1 t}{2}}\int_0^{\frac{t}{2}}(1+s)^{-\frac52}ds+\Big(1+\frac{t}{2}\Big)^{-\frac52}\int_{\frac{t}{2}}^te^{-\kappa_1 (t-s)}ds\\
&\ \leq C(1+t)^{-\frac52},\non
\end{split}
\ee
we deduce from \eqref{HHHa} that
\be
\widetilde{H}(t)\leq  H(0)e^{-\kappa_1 t}(1+t)^{\frac52} +C\Big(\|\u_0\|_{L^1\cap H^1}^2+ \|\E_0\|_{L^1\cap H^1}^2+ \delta^2 \widetilde{H}(t)\Big).\non
\ee
If $\delta>0$ is sufficiently small, we get for $t\geq 0$,
\be
\begin{split}
\widetilde{H}(t) &\leq
 C\left(H(0)+\|\u_0\|_{L^1\cap H^1}^2+ \|\E_0\|_{L^1\cap H^1}^2\right)\\
 &\leq C\left(\|\u_0\|_{L^1\cap H^2}^2+ \|\E_0\|_{L^1\cap H^2}^2\right).
 \label{esHH}
 \end{split}
\ee
The uniform estimate \eqref{esHH} together with \eqref{HH}--\eqref{H1uE} yields the decay estimates \eqref{esL2} and \eqref{esH2}.

Next, by the Sobolev embedding theorem and the Gagliardo-Nirenberg inequality, we obtain that
\be
\begin{split}
\|\u(t)\|_{L^6}+\|\mathbf{n}(t)\|_{L^6}&\leq C(\|\nabla \u(t)\|_{L^2}+\|\nabla \mathbf{n}(t)\|_{L^2})\\
&\leq C(1+t)^{-\frac54},\non
\end{split}
\ee
\be
\begin{split}
&\|\u(t)\|_{L^\infty}+\|\mathbf{n}(t)\|_{L^\infty}\\
&\ \leq C\left(\|\nabla \u(t)\|_{L^2}^\frac12\|\Delta \u(t)\|_{L^2}^\frac12+\|\nabla \mathbf{n}(t)\|_{L^2}^\frac12\|\Delta \mathbf{n}(t)\|_{L^2}^\frac12\right)\\
&\ \leq C(1+t)^{-\frac54}.\non
\end{split}
\ee
Then we can conclude the $L^p$ decay rate estimate \eqref{esLp} by interpolation. The proof is complete.
\end{proof}

Finally, we show that the $L^2$-decay rate \eqref{esL2} is optimal.

\begin{proposition}\label{nonop}
Suppose that the assumptions of Proposition \ref{glo} are satisfied. $\delta$ is a sufficiently small positive constant and the initial data satisfies $ \|\u_0\|_{H^2}+\|\E_0\|_{H^2}\leq \delta$. Moreover, we assume that $(\u_0, \mathbf{n}_0)$ $(\text{with}\ \mathbf{n}_0=\Lambda^{-1} \nabla \cdot \E_0)$ also satisfy
$$
 |\widehat{u_{i0}}|\geq c_0, \quad |\widehat{n_{i0}}|\geq c_0, \quad \text{for}\ \ 0\leq |\xi|<<1,$$
  where $c_0$ satisfies $c_0\sim O(\delta^\zeta)$  with $\zeta\in (0,1)$. Then the global solution $(\u, \E)$ to system \eqref{e1} has the following estimate
\be
\| \u (t)\|_{L^2}+\| \E(t) \|_{L^2}\geq C(1+t)^{-\frac34}, \quad \forall\,t\geq t_0,\label{opL2}
\ee
where $t_0$ is sufficiently large.
\end{proposition}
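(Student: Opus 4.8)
The plan is to run the nonlinear solution through Duhamel's principle and show that the free evolution of the initial data --- whose \emph{optimal} lower bound is already supplied by Lemma \ref{linear1}(ii) --- survives after subtracting the nonlinear correction, the latter being of strictly higher order in $\delta$. As in the proof of Lemma \ref{h1u}, it suffices to argue componentwise for $\mathbf{W}_i=(u_i,n_i)^T$, $i=1,2,3$, where $\mathbf{n}=\Lambda^{-1}(\nabla\cdot\E)$. Duhamel's formula for \eqref{un} gives
\[
\mathbf{W}_i(t)=e^{t\mathcal{B}}\mathbf{W}_{i0}+\int_0^t e^{(t-s)\mathcal{B}}\mathbf{f}_i(s)\,ds,\qquad \mathbf{f}_i=\big(g_i,\Lambda^{-1}(\nabla\cdot\mathbf{h})_i\big)^T,
\]
so that, by the reverse triangle inequality,
\[
\|\mathbf{W}_i(t)\|_{L^2}\ \geq\ \big\|e^{t\mathcal{B}}\mathbf{W}_{i0}\big\|_{L^2}-\Big\|\int_0^t e^{(t-s)\mathcal{B}}\mathbf{f}_i(s)\,ds\Big\|_{L^2}.
\]
Everything reduces to bounding the free term from below and the Duhamel integral from above.

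For the free term I would apply \eqref{op} of Lemma \ref{linear1}(ii) verbatim: the data $\mathbf{W}_{i0}=(u_{i0},n_{i0})^T$ are exactly the initial data of the linear problem \eqref{S1a} and satisfy $|\widehat{u_{i0}}|\geq c_0$, $|\widehat{n_{i0}}|\geq c_0$ near $\xi=0$, whence, for all $t\geq t_0$ with $t_0$ large,
\[
\big\|e^{t\mathcal{B}}\mathbf{W}_{i0}\big\|_{L^2}\ \geq\ C c_0\,(1+t)^{-\frac34}.
\]
For the Duhamel integral I would reproduce the estimate already performed in Lemma \ref{h1u}: combining the linear decay \eqref{decun} with the nonlinearity bounds \eqref{egg1}--\eqref{ehh1}, \eqref{egL12}, \eqref{ehL12}, the uniform smallness \eqref{stability} (which supplies the crucial extra factor $\delta$), and the definition of $\widetilde{H}(t)$, one obtains
\[
\Big\|\int_0^t e^{(t-s)\mathcal{B}}\mathbf{f}_i(s)\,ds\Big\|_{L^2}\ \leq\ C\delta\,\widetilde{H}^{\frac12}(t)\int_0^t(1+t-s)^{-\frac34}(1+s)^{-\frac54}\,ds\ \leq\ C\delta\,\widetilde{H}^{\frac12}(t)\,(1+t)^{-\frac34},
\]
and, using $\widetilde{H}^{\frac12}(t)\leq CM$ from \eqref{esHH}, the nonlinear correction is at most $C'\delta M(1+t)^{-\frac34}$.

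The key observation --- and the one genuine obstacle --- is that the nonlinear term decays at \emph{exactly the same} temporal rate $(1+t)^{-\frac34}$ as the linear lower bound, so the usual ``the perturbation decays faster'' reasoning is unavailable; the gain must come entirely from the power of $\delta$. Summing over $i$ and inserting the two bounds gives
\[
\|(\u(t),\mathbf{n}(t))\|_{L^2}\ \geq\ \big(Cc_0-C'\delta M\big)(1+t)^{-\frac34},\qquad t\geq t_0.
\]
This is where the hypothesis $c_0\sim O(\delta^\zeta)$ with $\zeta\in(0,1)$ is decisive: the free part is of order $c_0\sim\delta^\zeta$, whereas the nonlinear correction carries the additional factor $\delta$ inherited from \eqref{stability}, so that $\delta M/\delta^\zeta=\delta^{1-\zeta}M\to0$ as $\delta\to0$ (since $1-\zeta>0$ and $M$ is controlled in the small-data regime). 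Consequently, for $\delta$ sufficiently small, $Cc_0-C'\delta M\geq\frac12 Cc_0>0$.

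Finally, since $\|\mathbf{n}\|_{L^2}\approx\|\E\|_{L^2}$ for $\delta$ small by Lemma \ref{pre} (the equivalence \eqref{eqEn}), this lower bound transfers from $(\u,\mathbf{n})$ to $(\u,\E)$ and yields $\|\u(t)\|_{L^2}+\|\E(t)\|_{L^2}\geq C(1+t)^{-\frac34}$ for all $t\geq t_0$, i.e., the claim \eqref{opL2}.
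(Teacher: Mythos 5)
Your proposal is correct and follows essentially the same route as the paper's proof: Duhamel's principle for $\mathbf{W}_i=(u_i,n_i)^T$, the linear lower bound from Lemma \ref{linear1}(ii), the upper bound $C\delta\,\widetilde{H}^{1/2}(t)(1+t)^{-3/4}$ on the nonlinear correction via \eqref{egL12}, \eqref{ehL12}, \eqref{stability} and \eqref{esHH}, the observation that the gain comes from $\delta^{1-\zeta}\to 0$ rather than a faster temporal decay, and the final transfer from $\mathbf{n}$ to $\E$ via \eqref{eqEn}. Nothing is missing; your write-up is in fact slightly more explicit than the paper's about why $c_0\sim\delta^{\zeta}$ beats the $O(\delta)$ perturbation.
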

\begin{proof}
By the lower bound of $L^2$ decay \eqref{op} (see Lemma \ref{linear1} (ii)), the Duhamel's principle, \eqref{egL12}, \eqref{ehL12}, the definition of $\widetilde{H}(t)$ and the estimates \eqref{stability}, \eqref{esHH}, we deduce that
\be
\begin{split}
\|(\u(t), \mathbf{n}(t))\|_{L^2}
&\geq Cc_0 (1+t)^{-\frac34}\\
&\quad\ - C\sup_{0\leq s\leq t}(\|\u(s)\|_{H^1}+\|\E(s)\|_{H^1}) \\
&\qquad\ \ \times \int_0^t (1+t-s)^{-\frac34}(\|\Delta \u(s)\|_{L^2}+\|\Delta \E(s)\|_{L^2})ds\\
&\quad\ - C\sup_{0\leq s\leq t}(\|\u(s)\|_{L^2}+\|\E(s)\|_{L^2}) \\
&\qquad\ \ \times \int_0^t (1+t-s)^{-\frac34}(\|\nabla \u(s)\|_{L^2}+\|\nabla \E(s)\|_{L^2})ds\\
&\geq C\delta^\eta (1+t)^{-\frac34}-C\delta \widetilde{H}^\frac12(t)\int_0^t (1+t-s)^{-\frac34}(1+s)^{-\frac54} ds\\
&\geq C(\delta^\eta-C\delta) (1+t)^{-\frac34}\\
&\geq C(1+t)^{-\frac34},\non
\end{split}
\ee
provided that $\delta $ (and thus $c_0$) is sufficiently small.

Combining the above estimate with \eqref{eqEn}, we can conclude \eqref{opL2}. The proof is complete.
\end{proof}

\textbf{Proof of Theorem \ref{main}}. The conclusions of Theorem \ref{main} now easily follow from Propositions \ref{glo}, \ref{decayH2} and \ref{nonop}. The proof is complete.

\section{Weak-strong uniqueness}
\setcounter{equation}{0}
\noindent

In this section, we proceed to prove Theorem \ref{unique}.
 The pair $(\u,\E)$ will denote a classical solution (for instance, the global classical solution obtained in Proposition \ref{glo}, the proof for local classical solution in \cite[Theorem 1]{LLZ08} will be the same)
and the pair $(\hat{\u},\hat{\E})$ will denote a finite energy weak solution to the incompressible viscoelastic system \eqref{e1} as in Definition \ref{defw}.

Suppose that $(\u,\E)$ and $(\hat{\u},\hat{\E})$ share the same initial data $(\u_0,\E_0)$ satisfying
$\u_0, \E_0 \in H^k(\mathbb{R}^d)$ $(k\geq 3)$ and the assumptions (A1)--(A3). Our aim is to prove that $(\hat{\u},\hat{\E})=(\u,\E)$.

For this purpose, we
shall estimate the differences of solutions denoted by
$$\mathfrak{U}=\hat{\u}-\u\quad\textrm{and}\quad \mathfrak{E}=\hat{\E}-\E.$$
 In the variational formulation \eqref{w1} for the weak solution $\hat{\u}$, taking $\mathbf{v}=\u$ as the test function (which makes sense as $\u$ is regular)\footnote{Actually one needs to use smooth functions as
the test functions according to definitions in \eqref{w1}--\eqref{w2}, but this can be done via a standard regularization procedure in view of the regularity of $\u$ and $\E$.} and using the equation for $\u$ (see \eqref{e1}),
we obtain that
 \be
 \begin{split}
&\int_{\R^d}\hat{\u}(\cdot, t)\cdot\mathbf{u}(\cdot, t)dx -\int_{\R^d} \hat{\u}_0\cdot \u_0 dx\\
&\ = \int_0^t\int_{\R^d}\left[\hat{\u}\cdot\mathbf{u}_t+\left(\hat{\u}\otimes\hat{\u}-\hat{\E}\hat{\E}^T-\hat{\E}\right):\nabla\mathbf{u}\right] dxd\tau\\
&\quad \ -\mu\int_0^t\int_{\R^d}\nabla\hat{\u}:\nabla\mathbf{u} dxd\tau\\
&\ =\int_0^t\int_{\R^d}\left[-\left(\u\cdot \nabla \u\right)\cdot \hat{\u}+ \left(\hat{\u}\otimes\hat{\u}\right): \nabla\mathbf{u} \right]dx d\tau\\
&\quad\ +\int_0^t\int_{\R^d}\left[ \left(\nabla \cdot\left(\E+\E\E^T\right)\right)\cdot \hat{\u}- (\hat{\E}+\hat{\E}\hat{\E}^T):\nabla \u \right]dxd\tau\\
&\quad\ -2\mu\int_0^t\int_{\R^d}\nabla\hat{\u}:\nabla\mathbf{u} dxd\tau.
\label{w4}
\end{split}
\ee
Next, we test \eqref{w2} by $\E_j$ and sum up with respect to $j$. Using the equation \eqref{Ej} for $\E$, we get
\bea
&&  \int_{\R^d}\hat{\E} : \E dx-\int_{\R^d} \hat{\E}_{0} : \E_{0} dx\non\\
&=& \sum_{j=1}^d \int_0^t\int_{\R^d}\Big[\hat{\E}_j\cdot \E_{jt}+\left(\hat{\E}_j\otimes\hat{\u}-\hat{\u}\otimes\hat{\E}_j\right):\nabla \E_j -\hat{\u}\cdot \nabla _j \E_j\Big]dxd\tau\non\\
&=&
\sum_{j=1}^d \int_0^t\int_{\R^d}\Big[ -\nabla \cdot (\E_j\otimes\u-\u\otimes\E_j)\cdot \hat{\E}_j+\left(\hat{\E}_j\otimes\hat{\u}-\hat{\u}\otimes\hat{\E}_j\right):\nabla \E_j \Big]dxd\tau\non\\
&& + \int_0^t\int_{\R^d} (\nabla \u: \hat{\E}- \hat{\u}\cdot (\nabla\cdot \E)) dxd\tau.
\label{w5}
\eea

Recall that the weak solution $(\hat{\u}, \hat{\E})$ satisfies the energy inequality \eqref{w3} while the strong solution $(\u, \E)$ satisfies the energy equality such that for $t\in [0, T]$, we have
\be
  \frac12\left(\|\hat{\u}(t)\|_{L^2}^2+\|\hat{\E}(t)\|_{L^2}^2\right)
     + \mu\int_0^t\|\nabla \hat{\u}(\tau)\|_{L^2}^2 d\tau
     \leq \frac12\left(\|\hat{\u}_0\|_{L^2}^2+\|\hat{\E}_0\|_{L^2}^2\right),\label {w6}
\ee
and
\be
 \frac12\left(\|\u(t)\|_{L^2}^2+\|\E(t)\|_{L^2}^2\right)
     + \mu\int_0^t\|\nabla \u(\tau)\|_{L^2}^2 d\tau
     =\frac12\left(\|\u_0\|_{L^2}^2+\|\E_0\|_{L^2}^2\right).\label{w7}
\ee
Then it follows from \eqref{w4}--\eqref{w7} that
\be
 \frac12\left(\|\mathfrak{U}\|_{L^2}^2+\|\mathfrak{E}\|_{L^2}^2\right)
 +\mu\int_0^t  \|\nabla \mathfrak{U}\|^2_{L^2} d\tau
\leq \frac12\left(\|\mathfrak{U}_0\|_{L^2}^2+\|\mathfrak{E}_0\|_{L^2}^2\right) + R_1+R_2,\label{w8}
\ee
where the two reminder terms $R_1$ and $R_2$ are given by
\be
R_1= \int_0^t\int_{\R^d}\Big[(\u\cdot \nabla \u)\cdot \hat{\u}- (\hat{\u}\otimes\hat{\u}): \nabla\mathbf{u} \Big]dx d\tau,\non
\ee
\be
\begin{split}
R_2&= \int_0^t\int_{\R^d}\Big(-[\nabla \cdot(\E\E^T)]\cdot \hat{\u} +\sum_{j=1}^d [\nabla \cdot (\E_j\otimes\u-\u\otimes\E_j)]\cdot \hat{\E}_j\Big) dxd\tau\\
&\quad +\int_0^t\int_{\R^d} \Big[(\hat{\E}\hat{\E}^T):\nabla \u -\sum_{j=1}^d\left(\hat{\E}_j\otimes\hat{\u}-\hat{\u}\otimes\hat{\E}_j\right):\nabla \E_j\Big]dxd\tau.\non
\end{split}
\ee
Using the incompressible condition, we compute that
\be
\begin{split}
R_1&= -\int_0^t\int_{\R^d}\Big[(\u\cdot \nabla \u)\cdot \hat{\u}-(\u\cdot\nabla \u)\cdot \u-(\hat{\u}\cdot \nabla \u)\cdot \hat{\u}+(\hat{\u}\cdot\nabla \u)\cdot \u\Big] dxd\tau\\
&= -\int_0^t\int_{\R^d} (\mathfrak{U}\cdot\nabla \u)\cdot \mathfrak{U}dx d\tau\\
&\leq C\int_0^t \|\nabla \u\|_{L^\infty}\|\mathfrak{U}\|_{L^2}^2 d\tau.\non
\end{split}
\ee
Besides, we deduce from Lemma \ref{trans} and the fact $\nabla \cdot(\E \E^T)=\sum_{j=1}^{d} (\E_j\cdot \nabla) \E_j$ that
\be
\begin{split}
R_2
&=-\sum_{j=1}^d \int_0^t\int_{\R^d} (\E_j\cdot \nabla) \E_j \cdot \hat{\u} dxd\tau +\sum_{j=1}^d \int_0^t \int_{\R^d} (\u\cdot \nabla) \E_j\cdot \hat{\E}_jdx d\tau\\
&\quad\ -\sum_{j=1}^d  \int_0^t\int_{\R^d} (\E_j\cdot \nabla) \u\cdot  \hat{\E}_j  dxd\tau
 +\int_0^t\int_{\R^d} (\hat{\E}\hat{\E}^T):\nabla \u dx d\tau\\
&\quad\
 -\sum_{j=1}^d \int_0^t \int_{\R^d} (\hat{\u}\cdot \nabla)\E_j\cdot \hat{\E}_j dx d\tau + \sum_{j=1}^d \int_0^t\int_{\R^d} (\hat{\E}_j\cdot \nabla )\E_j \cdot \hat{\u} dxd\tau\\
&:= I_1+...+I_6.\non
\end{split}
\ee
Using the incompressibility condition again, after a tedious computation, we get
\be
\begin{split}
I_2+I_5 &= -\sum_{j=1}^d  \int_0^t\int_{\R^d} (\mathfrak{U} \cdot  \nabla)  \E_j\cdot \mathfrak{E}_j dxd\tau\\
 &\leq C \int_0^t\|\nabla \E\|_{L^\infty}\|\mathfrak{U}\|_{L^2}\|\mathfrak{E}\|_{L^2}d\tau \\
 &\leq C \int_0^t\|\nabla \E\|_{L^\infty}\left(\|\mathfrak{U}\|_{L^2}^2+\|\mathfrak{E}\|_{L^2}^2\right)d\tau \non
 \end{split}
\ee
and
\be
\begin{split}
&I_1+I_3+I_4+I_6\\
&\ = \sum_{j=1}^d \int_0^t\int_{\R^d} \Big[ (\mathfrak{E}_j \cdot \nabla \u)\cdot  \mathfrak{E}_j - (\E_j \otimes \mathfrak{E}_j):\nabla \mathfrak{U}+(\hat{\E}_j\cdot \nabla \mathfrak{U})\cdot \mathfrak{E}_j\Big]dxd\tau \\
&\quad\ -\int_0^t\int_{\R^d} \left(\mathfrak{E} \hat{\E}^T\right):\nabla  \mathfrak{U} dxd\tau\\
&\ = \sum_{j=1}^d \int_0^t\int_{\R^d} \Big[ (\mathfrak{E}_j \cdot \nabla \u)\cdot  \mathfrak{E}_j - (\E_j \otimes \mathfrak{E}_j):\nabla \mathfrak{U}\Big] dx d\tau\non\\
&\ \leq C \int_0^t\left(\|\nabla \u\|_{L^\infty}\|\mathfrak{E}\|_{L^2}^2+\| \E\|_{L^\infty}\|\mathfrak{E}\|_{L^2}\|\nabla \mathfrak{U}\|_{L^2} \right)d\tau\non\\
&\ \leq   \frac{\mu}{2} \int_0^t\|\nabla \mathfrak{U}\|_{L^2}^2 d\tau
+ C \int_0^t\left(\|\nabla \u\|_{L^\infty}\|\mathfrak{E}\|_{L^2}^2+ \mu^{-1}\| \E\|_{L^\infty}^2\|\mathfrak{E}\|_{L^2}^2\right)d\tau.\non
\end{split}
\ee
Then we infer from the about estimates and the inequality \eqref{w8} that
\be
\begin{split}
& \|\mathfrak{U}\|_{L^2}^2+\|\mathfrak{E}\|_{L^2}^2+\mu\int_0^t \|\nabla \mathfrak{U}\|^2_{L^2} d\tau
\\
&\ \leq \|\mathfrak{U}_0\|_{L^2}^2+\|\mathfrak{E}_0\|_{L^2}^2 + C\int_0^t h(\tau)\left(\|\mathfrak{U}\|_{L^2}^2+\|\mathfrak{E}\|_{L^2}^2\right)d\tau,\label{gron}
\end{split}
\ee
where
\be
h(t)=\|\nabla \u\|_{L^\infty}+ \|\nabla \E\|_{L^\infty}+\|\E\|_{L^\infty}^2.\non
\ee

We recall that $(\u,\E)$ is assumed to be the global classical solution as in Proposition \ref{glo} with $k\geq 3$, it follows from \eqref{regglo} and the Sobolev embedding theorem $(d=2,3)$ that for $T\in (0, +\infty)$, the solution also satisfies
\begin{equation}\label{55}
\nabla\u\in L^1(0,T; L^\infty(\R^d)), \quad \E\in L^1(0,T; W^{1,\infty}(\R^d))\cap L^2(0,T; L^\infty(\R^d)),
\end{equation}
namely, $h(t)\in L^1(0,T)$.

Since $\mathfrak{U}_0=\mathfrak{F}_0=0$, then by the inequality \eqref{gron} and  the Gronwall lemma, we can conclude that
$$\mathfrak{U}(t)= \mathfrak{E}(t)=0\quad\textrm{for}\quad t\in [0,T].$$

We remark that if $(\u,\E)$ is assumed to be a local classical solution with $H^k$ $(k\geq 3)$ initial data on $[0,T^*]$ for some $T^*>0$ depending on $\|\u_0\|_{H^2}, \|\E_0\|_{H^2}$ as in \cite[Theorem 1]{LLZ08}, then we have $h(t)\in L^1(0, T^*)$, which again indicates the uniqueness result.

The proof of Theorem \ref{unique} is complete.

\section*{Acknowledgements}  Wu was
partially supported by National Science Foundation of China 11371098 and ``Zhuo Xue" Program of Fudan University.



\begin{thebibliography}{10}
\itemsep=0pt

\bibitem{CZ} Y. Chen and P. Zhang, The global existence of small solutions to the incompressible viscoelastic fluid system in 2 and 3 space dimensions, Comm. Partial
Differential Equations, \textbf{31}(10-12) (2006), 1793--1810.

\bibitem{Da00} R. Danchin, Global existence in critical spaces for compressible Navier--Stokes equations, Invent. Math., \textbf{141}(3) (2000), 579--614.

\bibitem{DLZ} Y. Du, C. Liu and Q.-T. Zhang, Blow-up criterion for compressible visco-elasticity equations in three dimensional space, Comm. Math. Sci.,  \textbf{12}(3) (2014), 473--484.

\bibitem{SE} A. C. Eringen, E. S. Suhubi: {\em Elastodynamics. Vol. I. Finite motions}. Academic Press, New York-London, 1974.

\bibitem{Giga} Y. Giga, Solutions for semilinear parabolic equations in $L^p$ and regularity of weak solutions
of the Navier--Stokes system, J. Differential Equations, \textbf{62} (1986), 186--212.


\bibitem{HH13} X.-P. Hu and R. Hynd, A blowup criterion for ideal viscoelastic flow, J. Math. Fluid Mech., \textbf{15}(3) (2013), 431--437.

\bibitem{HL} X.-P. Hu and F.-H. Lin, Global solution to two dimensional incompressible viscoelastic fluid with discontinuous data, ArXiv:1312.6749.

\bibitem{HuWang10} X.-P. Hu and  D.-H. Wang, Local strong solution to the compressible viscoelastic flow with large data, J.
Differential Equations, \textbf{249}(5) (2010), 1179--1198.


\bibitem{HuWang11} X.-P. Hu and D.-H.  Wang, Global existence for the multi-dimensional compressible viscoelastic flows, J. Differential Equations, \textbf{250}(2) (2011), 1200--1231.

\bibitem{HuWang12}  X.-P. Hu and D.-H.  Wang, Strong solutions to the three-dimensional compressible viscoelastic fluids, J. Differential Equations, \textbf{252}(6) (2012), 4027--4067.

\bibitem{HWGC} X.-P. Hu and G.-C. Wu, Global existence and optimal decay rates for three-dimensional compressible viscoelastic flows, SIAM J. Math. Anal., \textbf{45} (2013), 2815--2833.

 \bibitem{KM} R. Kajikiya and T. Miyakawa, On $L^2$ decay of weak solutions of the Navier--Stokes Equations in $R^n$, Math. Z., \textbf{192}(1986), 135--148.


\bibitem{Ka} T. Kato, Strong $L^p$ solutions of Navier--Stokes equation in $R^m$, with applications to weak
solutions, Math. Z., \textbf{187} (1984), 471--480.

\bibitem{KS96} H. Kozono and H. Sohr, Remark on uniqueness of weak solutions to the Navier--Stokes equations,
Analysis, \textbf{16} (1996), 255--271.

\bibitem{Ko02} T. Kobayashi, Some estimates of solutions for the equations of motion of compressible viscous
fluid in an exterior domain in $\mathbb{R}^3$, J. Differential Equations, \textbf{184} (2002), 587--619.

\bibitem{KS99} T. Kobayashi and Y. Shibata, Decay estimates of solutions for the equations of motion of compressible viscous and heat-conductive gases in an exterior domain in $\mathbb{R}^3$, Comm. Math. Phys., \textbf{200} (1999), 621--659.

\bibitem{La} R.-G. Larson, \textit{The Structure and Rheology of Complex Fluids}, Oxford University Press, New York,
1995.

\bibitem{LZ10} Z. Lei, On 2D viscoelasticity with small strain, Arch. Ration. Mech. Anal., \textbf{198}(1) (2010),  13--37.

\bibitem{LLZ07} Z. Lei, C. Liu and Y. Zhou, Global existence for a 2D incompressible viscoelastic model with small strain, Commun. Math. Sci., \textbf{5}(3) (2007), 595--616.

\bibitem{LLZ08}  Z. Lei, C. Liu and Y. Zhou, Global solutions for incompressible viscoelastic fluids, Arch. Ration. Mech. Anal., \textbf{188}(3) (2008), 371--398.

\bibitem{LZ07} Z. Lei and Y. Zhou, Global existence of classical solutions for 2D Oldroyd model
via the incompressible limit, SIAM J. Math. Anal., \textbf{37}(3) (2005), 797--814.

\bibitem{LMZ10} H.-L. Li, A. Matsumura and G.-J. Zhang, Optimal decay rate of the compressible Navier--Stokes--Poisson system in $\mathbb{R}^3$, Arch. Ration. Mech. Anal., \textbf{196}(2) (2010), 681--713.

\bibitem{LZ11}  H.-L. Li and T. Zhang,  Large time behavior of isentropic compressible Navier--Stokes system in $\mathbb{R}^3$, Math. Methods Appl. Sci., \textbf{34}(6) (2011), 670--682.

\bibitem{Lin12} F.-H. Lin, Some analytical issues for elastic complex fluids, Comm. Pure Appl. Math., \textbf{65} (2012), 893--919.


\bibitem{LLZ05} F.-H. Lin, C. Liu and P. Zhang, On hydrodynamics of viscoelastic fluids, Comm. Pure Appl. Math., \textbf{58} (2005), 1437--1471.

\bibitem{LinZ08} F.-H. Lin and P. Zhang, On the initial-boundary value problem of the incompressible viscoelastic fluid
system, Comm. Pure Appl. Math., \textbf{61} (2008), 539--558.

\bibitem{LM00} P.-L. Lions and N. Masmoudi, Global solutions for some Oldroyd models of non-Newtonian flows,
Chin. Ann. Math. Ser. B, \textbf{21} (2000), 131--146.



\bibitem{LW01}  C. Liu and N.-J. Walkington, An Eulerian description of fluids containing visco-elastic particles,
Arch. Ration. Mech. Anal., \textbf{159} (2001), 229--252.

\bibitem{Po} G. Prodi, Un teorema di unicit\'a per le equazioni di Navier--Stokes, Ann. Mat. Pura Appl., \textbf{48} (1959), 173--182.

\bibitem{Q10} J.-Z. Qian,  Well-posedness in critical spaces for incompressible viscoelastic fluids system, Nonlinear
Anal., \textbf{72} (2010), 3222--3234.

\bibitem{Q} J.-Z. Qian, Initial boundary value problems for the compressible viscoelastic fluid, J.
Differential Equations, \textbf{250}(2) (2011), 848--865.

\bibitem{QZ} J.-Z. Qian and Z.-F. Zhang, Global well-posedness for compressible viscoelastic fluids
near equilibrium, Arch. Ration. Mech. Anal., \textbf{198}(3) (2010), 835--868.

\bibitem{Sch85} M. Schonbek,  $L^2$ decay for weak solutions of the Navier--Stokes equations,
Arch. Rational Mech. Anal., \textbf{88} (1985), 209--222.

\bibitem{Sch86} M. Schonbek, Large time behavior of solutions to the Navier--Stokes equations, Comm. Partial Differential Equations, \textbf{11} (1986),  733--763.


\bibitem{Serrin} J. Serrin, On the interior regularity of weak solutions of Navier--Stokes equations, Arch. Ration. Mech. Anal.,
\textbf{9} (1962), 187--195.

\bibitem{TW12} Z. Tan and G.-C. Wu, Large time behavior of solutions for compressible Euler equations with damping in $\mathbb{R}^3$, J. Differential Equations, \textbf{252}(2) (2012), 1546--1561.

\bibitem{Te} R. Temam, \emph{Navier--Stokes Equations}, North-Holland, Amsterdam, 1977.

\bibitem{ZF12} T. Zhang and D.-Y. Fang, Global existence of strong solution for equations related to the incompressible viscoelastic fluids in the critical $L^p$ framework, SIAM J. Math. Anal., \textbf{44}(4) (2012), 2266--2288.

\bibitem{Z04} S.~Zheng, \emph{Nonlinear Evolution Equations}, Pitman series
Monographs and Survey in Pure and Applied Mathematics, \textbf{133}, Chapman
\& Hall/CRC, Boca Raton, Florida, 2004.

\end{thebibliography}
\end{document}